\def\blfootnote{\gdef\@thefnmark{}\@footnotetext}
\def\RCAo{\mathsf{RCA_0}}
\def\WKLo{\mathsf{WKL_0}}
\def\E{\exists}
\def\A{\forall}
\def\N{\mathbb{N}}
\def\dom{\mathop{\mathrm{dom}}\nolimits}
\newcommand{\mc}[1]{\mathcal{#1}}
\def\P2{\Pi^1_2}
\def\PHt{\mathrm{PH}^2_2}
\def\RT{\mathrm{RT}}
\newcommand{\D}{\mathrm{D}}
\newcommand{\low}{\mathrm{low}}
\def\COH{\mathrm{COH}}
\def\BII{\mathrm{B}\Sigma^0_2}
\def\BIII{\mathrm{B}\Sigma^0_3}
\def\II{\mathrm{I}\Sigma^0_1}
\def\III{\mathrm{I}\Sigma^0_2}
\newcommand{\IN}[1][n]{\mathrm{I}\Sigma^0_{#1}}
\newcommand{\BN}[1][n]{\mathrm{B}\Sigma^0_{#1}}
\newcounter{menum}
{\begin{enumerate}%
\setcounter{enumi}{#1}}%
{\setcounter{menum}{\value{enumi}}\end{enumerate}}
\newtheorem{thm}{Theorem}[section]
\newtheorem*{theorem*}{Theorem}
\newtheorem*{claim*}{Claim}
\newtheorem{lem}[thm]{Lemma}
\theoremstyle{definition}
\newtheorem{defi}{Definition}[section]
\newtheorem{rem}[thm]{Remark}
\definecolor{lightred}{rgb}{1,.60,.60}
\begin{document}
\newpage

\title{The strength of Ramsey's theorem for\\ pairs and arbitrarily many colors}

\author[1]{Theodore A.~Slaman}
\author[2]{Keita Yokoyama}

\affil[1]{\small \sf{slaman@math.berkeley.edu}}
\affil[2]{\small \sf{y-keita@jaist.ac.jp}}

\date{July 5, 2018
}

\blfootnote{
The first author is partially supported by the National Science Foundation grant DMS-1600441.
The second author is partially supported by
JSPS KAKENHI (grant numbers 16K17640 and 15H03634) and JSPS Core-to-Core Program
(A.~Advanced Research Networks).
 }

\maketitle
\def\Bexp{\mathrm{B}\Sigma_{1}+\mathrm{exp}}
\newcommand\RF{\mathrm{RF}}
\newcommand\tpl{\mathrm{tpl}}
\newcommand\col{\mathrm{col}}
\newcommand\fin{\mathrm{fin}}
\newcommand\Log{\mathrm{Log}}
\newcommand\Ct{\mathrm{Const}}
\newcommand\It{\mathrm{It}}
\renewcommand\PHt{\widetilde{\mathrm{PH}}{}}
\newcommand\BME{\mathrm{BME}_{*}}
\newcommand\HT{\mathrm{HT}}
\newcommand\Fin{\mathrm{Fin}}
\newcommand\FinHT{\mathrm{FinHT}}
\newcommand\wFinHT{\mathrm{wFinHT}}
\newcommand\FS{\mathrm{FS}}
\newcommand\LL{\mathsf{L}}
\newcommand\GPg{\mathrm{GP}}
\newcommand\GP{\mathrm{GP}^{2}_{2}}
\newcommand\FGPg{\mathrm{FGP}}
\newcommand\FGP{\mathrm{FGP}^{2}_{2}}
\newcommand\SGP{\mathrm{SGP}^{2}_{2}}
\newcommand\Con{\mathrm{Con}}
\newcommand\WF{\mathrm{WF}}
\newcommand\bb{\mathbf{b}}

\definecolor{lightred}{rgb}{1,.60,.60}
\newcommand{\ted}[1]{\sethlcolor{lightred}\hl{#1}}
\newcommand{\keita}[1]{\sethlcolor{yellow}\hl{#1}}
\newcommand{\tedf}[1]{\sethlcolor{lightred}\hl{\footnote{\hl{#1}}}}
\newcommand{\keitaf}[1]{\sethlcolor{yellow}\hl{\footnote{\hl{#1}}}}

\begin{abstract}
In this paper, we will show that $\RT^{2}+\WKLo$ is a $\Pi^{1}_{1}$-conservative extension of $\BN[3]$.
\end{abstract}

\section{Introduction}
The strength of Ramsey's theorem is well-studied in the setting of reverse mathematics.
In this paper, we will focus on the first-order consequences of Ramsey's theorem for pairs over the base system $\RCAo$.
On the first-order part of Ramsey's theorem for pairs and two colors ($\RT^{2}_{2}$), Hirst\cite{Hirst-PhD} showed that it implies $\BN[2]$ and then Cholak/Jockusch/Slaman\cite{CJS} proved that $\RT^{2}_{2}+\WKLo+\IN[2]$ is a $\Pi^{1}_{1}$-conservative extension of $\IN[2]$.
Thus, its first-order part is in between $\BN[2]$ and $\IN[2]$.
There are many studies to determine the exact strength, and recently Chong/Slaman/Yang\cite{CSY2017} showed that $\RCAo+\RT^{2}_{2}$ does not imply $\IN[2]$, and Patey/Yokoyama\cite{PY2018} showed that $\WKLo+\RT^{2}_{2}$ is a $\Pi^{0}_{3}$-conservative extension of $\BN[2]$, which means that the first-order part of $\RT^{2}_{2}$ is closer to $\BN[2]$.

How about the strength of Ramsey's theorem for pairs and arbitrarily many colors ($\RT^{2}$)?
Over $\RCAo$, one may easily see that $\RT^{2}_{k}$ implies $\RT^{2}_{k+1}$, but that does not mean $\RT^{2}_{2}$ implies $\RT^{2}$ since the induction available within $\RCAo$ is not strong enough.
Indeed, the case for $\RT^{2}$ is very similar to the case for $\RT^{2}_{2}$ and the following are known.
\begin{thm}[Hirst\cite{Hirst-PhD}]
$\RT^{2}+\RCAo$ implies $\BN[3]$.
\end{thm}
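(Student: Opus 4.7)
The plan is to adapt Hirst's argument for $\RT^{2}_{2}\to\BN[2]$, using the additional colors made available by $\RT^{2}$ to absorb the extra quantifier alternation in $\BN[3]$. We argue by contradiction: assume $\BN[3]$ fails, so there exist $a$ and a $\Pi^{0}_{2}$-formula $P(i,y)\equiv\forall u\,\exists v\,R(i,y,u,v)$ (with $R$ bounded) satisfying $\forall i<a\,\exists y\,P(i,y)$ while $\forall b\,\exists i<a\,\forall y<b\,\neg P(i,y)$. Define the $\Delta^{0}_{1}$ approximation $f_{s}(i):=\mu y\le s\,[(\forall u\le s)(\exists v\le s)\,R(i,y,u,v)]$, with $f_{s}(i)=s+1$ when no such $y$ exists, and form the coloring $c\colon[\N]^{2}\to 2^{a}$ given by $c(\{s,t\})=\{i<a:f_{s}(i)\ne f_{t}(i)\}$. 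Applying $\RT^{2}_{2^{a}}$ (a consequence of $\RT^{2}$) yields an infinite homogeneous set $H\subseteq\N$ of color $C\subseteq[0,a)$.

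If $C=\emptyset$, then $f_{s}$ and $f_{t}$ agree on $[0,a)$ for all $s,t\in H$. Fix $s_{0}\in H$ and set $B=\max_{i<a}f_{s_{0}}(i)$. For each $i<a$ and each $u\in\N$, choose $s\in H$ with $s\ge u$; by homogeneity $f_{s}(i)=f_{s_{0}}(i)$, so the approximation delivers $\exists v\le s\,R(i,f_{s_{0}}(i),u,v)$. Consequently $\forall u\,\exists v\,R(i,f_{s_{0}}(i),u,v)$, i.e., $P(i,f_{s_{0}}(i))$ holds, and $b:=B+1$ bounds the least witnesses of $\exists y\,P(i,y)$ uniformly in $i<a$, contradicting the failure clause.

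If $C\ne\emptyset$, fix $i^{*}\in C$; then the values $f_{s}(i^{*})$ for $s\in H$ are pairwise distinct. The first failure clause supplies $y^{*}$ with $P(i^{*},y^{*})$. Using $\BN[2]$---available since $\RT^{2}_{2}\to\BN[2]$ by Hirst---one bounds the ``bad'' $u$-witnesses of $\neg P(i^{*},y)$ for $y\le y^{*}$ uniformly, and deduces that $f_{s}(i^{*})\le y^{*}$ on a cofinal subset of $H$. Finite pigeonhole then produces distinct $s,t\in H$ with $f_{s}(i^{*})=f_{t}(i^{*})$, contradicting $i^{*}\in C$.

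The main obstacle is Case 2. Unlike the monotone $\Pi^{0}_{1}$ approximation in Hirst's original argument, the $\Pi^{0}_{2}$ approximation $f_{s}$ need not converge to the true least-witness function in every model of $\RCAo$, so controlling $f_{s}(i^{*})$ along $H$ calls for a delicate use of $\BN[2]$ to handle the extra quantifier alternation without circularly invoking $\BN[3]$ itself. Should the direct approach stall, natural refinements include enriching the coloring with additional bookkeeping (such as the direction of changes in $f_{s}$), separating the $u$- and $v$-stage parameters, or replacing the naive approximation by one with a slower-growing stage bound on the outer universal quantifier inside $P$.
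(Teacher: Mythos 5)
Your Case 1 ($C=\emptyset$) is sound, but Case 2 contains a step that is not merely delicate but unjustified, and in fact false in general: the inference that $f_{s}(i^{*})\le y^{*}$ on a cofinal subset of $H$. Bounding, via $\BN[2]$, the bad $u$-witnesses of $\neg P(i^{*},y)$ for the $y\le y^{*}$ with $\neg P(i^{*},y)$ only guarantees that for all large $s$ no \emph{bad} $y\le y^{*}$ passes the stage-$s$ test; it does nothing to make a \emph{good} $y\le y^{*}$ pass it. For $y^{*}$ to pass the stage-$s$ test you need $(\forall u\le s)(\exists v\le s)\,R(i^{*},y^{*},u,v)$, i.e.\ the Skolem function $g(u)=\mu v\,R(i^{*},y^{*},u,v)$ must be bounded by $s$ on $[0,s]$; if $g$ dominates the identity (say $g(u)\ge u+1$ for all $u$), this fails for \emph{every} $s$, so $f_{s}(i^{*})>y^{*}$ for all large $s$ and the pigeonhole step never applies. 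Nothing in the assumed failure of $\BN[3]$ rules out such growth, so homogeneity with $i^{*}\in C$ yields no contradiction at all: truncating both quantifiers of a $\Pi^{0}_{2}$ formula at the same stage $s$ is not a limit approximation, and a nonempty color class merely records that the truncated approximations never settle --- which is compatible with everything you assumed. Consequently the argument proves only Case 1 and does not prove the theorem.

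The missing idea is precisely the one you list as a speculative refinement: decouple the two bounds and let the \emph{pair} carry them, working with the approximation $(\forall u\le s)(\exists v\le t)\,R(i,y,u,v)$ for $s<t$. With this two-parameter test a true witness is, for every $s$, confirmed by all sufficiently large $t$, while a false witness is permanently rejected once $s$ exceeds its bad $u$; the coloring of $\{s,t\}$ must be built from this data (e.g.\ recording for which $i<a$ the apparent least witness changes or is refuted between $s$ and $t$), with $\BN[2]$ (available since $\RT^{2}_{2}$ implies it) handling the bookkeeping. That is how Hirst's argument absorbs the extra quantifier; note the paper itself does not reprove this theorem but only cites Hirst's thesis, so you should either carry out this corrected coloring in detail or cite the original proof.
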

\begin{thm}[Cholak/Jockusch/Slaman\cite{CJS}]\label{thm:CJS-RT2}
$\RT^{2}+\WKLo+\IN[3]$ is a $\Pi^{1}_{1}$-conservative extension of $\IN[3]$.
\end{thm}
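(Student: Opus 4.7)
The approach is model-theoretic: the plan is to show that every countable model $(M, \mathcal{X}) \models \RCAo + \IN[3]$ admits an extension of its second-order part $\mathcal{X} \subseteq \mathcal{X}'$ such that $(M, \mathcal{X}') \models \RCAo + \WKLo + \RT^{2} + \IN[3]$. Since the first-order structure is unchanged and any $\Pi^{1}_{1}$-counterexample witnessed by a set already sits in $\mathcal{X}$, this suffices for $\Pi^{1}_{1}$-conservation. I would build $\mathcal{X}'$ as a countable Turing ideal of subsets of $|M|$ via an iterated construction in which every newly added set is low$_{2}$ over the finitely many parameters it must respect. Because low$_{2}$ extensions do not change the $\Sigma^{0}_{3}$-definable sets modulo those parameters, $\IN[3]$ is preserved along the iteration and at the limit.

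The construction proceeds in $\omega$ stages that jointly enumerate all would-be $\WKLo$ and $\RT^{2}$ instances: the infinite binary trees and the colorings $c : [M]^{2} \to k$ (with $k \in M$) that appear in the final ideal. At stage $s$, given ideal $\mathcal{X}_{s}$, I would process the next instance as follows. For a tree $T$, apply the low basis theorem relativized to a finite basis of parameters from $\mathcal{X}_{s}$, obtaining a path $P$ that is low, hence low$_{2}$, over those parameters. For a $k$-coloring $c$, apply the Cholak--Jockusch--Slaman low$_{2}$-preservation theorem for $\RT^{2}_{k}$: there is an infinite $c$-homogeneous $H$ such that $H \oplus X$ is low$_{2}$ over $X$ for each parameter $X$ listed so far. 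The CJS production of $H$ factors through a $\COH$ step---a Mathias-style forcing that extracts a cohesive set against a $\Delta^{0}_{2}$-family of reservoirs---followed by a stable $\SRT^{2}_{k}$ step, in which the induced limit coloring is resolved by the standard low$_{2}$ machinery (Jockusch--Stephan together with the CJS $\Pi^{0}_{1}$-class arguments). Adding the chosen $P$ or $H$ and closing under join and Turing reducibility inside $M$ defines $\mathcal{X}_{s+1}$; the limit ideal is $\mathcal{X}' = \bigcup_{s} \mathcal{X}_{s}$.

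The main obstacle is that $\RT^{2}$ requires solving $\RT^{2}_{k}$ for every $k \in M$, possibly nonstandard, so the entire CJS extraction must be carried out uniformly in $k$ inside $M$. The ``reservoir-is-large'' conditions controlling the cohesive Mathias forcing, and the low$_{2}$ verification for the resulting homogeneous set, are each $\Sigma^{0}_{3}$-definable in the parameters and in $k$; the quantification of these conditions over all stages of the forcing and all $k \in M$ is exactly what $\Sigma^{0}_{3}$-induction (i.e.\ $\IN[3]$) supports. This is also the reason for the base-theory jump from $\IN[2]$ in the $\RT^{2}_{2}$ case to $\IN[3]$ here, matching the Hirst lower bound $\BN[3]$. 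Once the internal uniform CJS extraction is in hand, the three-way bookkeeping of parameters, requirements, and ideal closure is routine, and the limit ideal $\mathcal{X}'$ yields the required extension, establishing the conservation result.
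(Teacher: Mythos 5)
Your outline is correct, but it is worth being clear that the paper does not reprove this theorem at all: it cites Cholak--Jockusch--Slaman for it, and your proposal is essentially a reconstruction of that original CJS argument -- iterate over a countable model of $\RCAo+\IN[3]$, add low paths for $\WKLo$, decompose each coloring into $\COH$ plus a stable/$\D^{2}$ step, extract an internally $\low_2$ homogeneous set, and use the reduction of $\Sigma^{X\oplus H}_{3}$ predicates to $\Sigma^{X}_{3}$ ones to preserve $\IN[3]$ through the ideal and at the limit. Two points in your sketch deserve emphasis: ``$\low_2$'' must be obtained in the formalized, internal sense (a $\Delta^{0}_{3}$ description of the solution together with the definability reduction inside a possibly nonstandard model), since external lowness means nothing there; and the stable step is CJS's own $\Pi^{0}_{1}$-class / double-jump-control construction (Jockusch--Stephan supplies the cohesive-set side), whose $\mathbf{0}''$-primitive recursion and choice of the correct color are exactly the two places where $\IN[3]$ is consumed -- carrying that out uniformly in a nonstandard number of colors $k$ is the real content, which you flag but do not carry out. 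The paper's own new construction (Section 3) diverges from your route precisely at those two points: aiming at the sharper conservation over $\BN[3]$, it cannot afford $\IN[3]$, so it runs the double-jump control for all $k$ colors simultaneously, decides finite blocks of $\Sigma^{0}_{2}$ facts at each step (Shore blocking) along a left-most path of conditions, and uses $\BN[3]$ to argue that some color survives. Your approach buys a simpler verification by spending the $\IN[3]$ available in the base theory, which is legitimate for the theorem as stated; the paper's construction buys the optimal base theory $\BN[3]$, which is the whole point of the present article and supersedes the first-order information in this cited result.
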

Hence, the first-order part of $\RT^{2}$ is between $\BN[3]$ and $\IN[3]$.
Here, we will sharpen the proof of this theorem, and determine the exact first-order part of $\RT^{2}$, namely it is $\BN[3]$.
For the basic notions of this area, see \cite{CJS,Slicing-the-truth,SOSOA}.



\section{The first-order part of $\RT^{2}$}
Our main conservation theorem is the following.
\begin{thm}
 $\RT^{2}+\WKLo$ is a $\Pi^{1}_{1}$-conservative extension of $\BN[3]$.
\end{thm}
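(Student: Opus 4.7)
The plan is model-theoretic. By the usual equivalence, it suffices to show that every countable model $M \models \BN[3]$ admits an expansion $(M,\mathcal{S}^{*}) \models \RCAo + \WKLo + \RT^{2}$. We build $\mathcal{S}^{*}$ by iterated forcing, enumerating all instances of $\WKLo$ and of $\RT^{2}$ that arise in the second-order universe being built, and adjoining a solution at each step through a forcing that preserves $\BN[3]$ in the ground model. The $\WKLo$ steps are absorbed by standard Jockusch--Soare low-basis forcing, for which preservation of $\BN[3]$ is routine.

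The substance lies in the $\RT^{2}$ step: given a coloring $c\colon [M]^{2}\to k$ with $k\in M$ (possibly non-standard), one must produce an infinite $c$-homogeneous set $H$ whose addition preserves $\BN[3]$. The proof of Theorem~\ref{thm:CJS-RT2} supplies such an $H$ as a low$_{2}$ homogeneous set via Mathias-style forcing, but the complexity analysis there appeals to $\IN[3]$ to bound the $\Sigma^{0}_{3}$-definable classes of conditions. To weaken this to $\BN[3]$, I would adapt the technique of Patey--Yokoyama~\cite{PY2018}: recast the Mathias forcing using conditions built from partial homogeneous sets subject to an appropriate notion of \emph{largeness} (expressible by an indicator-like function), so that the existence of an extendible condition at any stage reduces to a finitary combinatorial lemma. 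The plan is then to prove that lemma---the existence of large partial homogeneous sets of controlled size, uniformly in the number of colors $k$---directly in $\BN[3]$, after which the forcing can be run inside any model of $\BN[3]$ without appealing to $\IN[3]$, since the relevant $\Sigma^{0}_{3}$-definable classes of conditions are replaced by $\Sigma^{0}_{3}$-bounded ones.

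The main obstacle is uniformity in the (possibly non-standard) number of colors $k$. In the CJS argument one may freely induct on $k$ using $\IN[3]$; here only $\BN[3]$ is available, so the finitary largeness lemma must take a Paris--Harrington-like form whose bound on the required size of partial homogeneous sets is a provably total function of $k$, rather than one defined by induction on $k$. Producing such a uniform Ramsey-theoretic bound, and checking that the resulting classes of large conditions behave well under the operations needed for the iteration (extension, thinning, and closure under the Mathias order), is the combinatorial heart of the argument; granted these ingredients, $\Pi^{1}_{1}$-conservation follows by the standard model-theoretic glue used in Theorem~\ref{thm:CJS-RT2}.
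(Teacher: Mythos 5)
Your overall frame (expand an arbitrary countable model of $\BN[3]$ by iterated forcing to a model of $\RCAo+\WKLo+\RT^{2}$, preserving $\BN[3]$ at each step) is the standard glue and matches the paper, but the substance of your plan is left open, and the technique you propose for the hard step is not the one that works here. First, the paper never forces with full $\RT^{2}$ instances: it uses the Cholak--Jockusch--Slaman decomposition of $\RT^{2}$ into $\D^{2}$ plus $\COH$ over $\RCAo$, quotes H\'ajek/Belanger for $\Pi^{1}_{1}$-conservation of $\WKLo+\COH$ over $\BN[3]$, and amalgamates the two conservation results; the only new construction needed is for $\D^{2}$, i.e.\ $\Delta^{0}_{2}$-partitions $M=\bigsqcup_{i<k}\mc{A}_{i}$. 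Second, ``preserving $\BN[3]$'' has to be cashed out as a double-jump-control requirement: one must produce an unbounded $G$ inside a single cell such that every $\Sigma^{B\oplus G}_{3}$ set is already $\Sigma^{B}_{3}$. Your outline never states this preservation criterion, and without it the claim that the $\RT^{2}$ step ``preserves $\BN[3]$'' is unsupported.

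The combinatorial core you propose --- adapting the Patey--Yokoyama largeness/indicator machinery to a ``Paris--Harrington-like bound uniform in $k$'' provable in $\BN[3]$ --- is exactly the part you do not prove, and it is doubtful as stated. That machinery was designed for $\Pi^{0}_{3}$-conservation: it works by passing to a cut of the model, which destroys the first-order universe and hence cannot directly yield $\Pi^{1}_{1}$-conservation, where the counterexample $(M,B)\models\neg\psi(B)$ must be kept intact and only the second-order part enlarged (indeed, whether such an approach gives $\Pi^{1}_{1}$-conservation of $\RT^{2}_{2}$ over $\BN[2]$ remains open). The paper's actual route is different: it runs the CJS ``double jump control'' Mathias forcing inside a countable model of $\WKLo+\BN[3]$ (obtained from H\'ajek's theorem) and eliminates the two uses of $\IN[3]$ by (i) building partial solutions for all $k$ colors simultaneously, only arguing at the end via $\BN[3]$ that some color survives through every stage of the construction, and (ii) a Shore-type blocking argument that decides a bounded block of $\Sigma^{0}_{2}$ facts at each step, so that the $\mathbf{0}''$-primitive recursion may terminate along a proper cut and still settle all $\Sigma^{0}_{2}$ statements, giving $\Sigma^{B\oplus G}_{3}\subseteq\Sigma^{B}_{3}$. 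Without these ideas (or a genuinely worked-out substitute for them), your proposal is a plausible research plan rather than a proof.
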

To show the main theorem, we will sharpen the argument from \cite{CJS}, which is used for the proof of Theorem~\ref{thm:CJS-RT2}.

\begin{thm}[Cholak/Jockusch/Slaman\cite{CJS}]
Over $\RCAo$, $\RT^{2}$ is equivalent to $\D^{2}$ plus $\COH$.
\end{thm}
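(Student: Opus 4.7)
The plan is to prove each direction of the equivalence separately over $\RCAo$. The forward direction splits naturally into two derivations, $\RT^{2}\vdash\D^{2}$ and $\RT^{2}\vdash\COH$, while the converse $\D^{2}+\COH\vdash\RT^{2}$ is a single argument that uses $\COH$ to reduce an arbitrary coloring of pairs to a stable one, to which $\D^{2}$ is then applied.

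For $\RT^{2}\vdash\D^{2}$, I would argue as follows. A stable $\Delta^{0}_{2}$-coloring $f:\N\to k$ has a computable approximation $g:[\N]^{2}\to k$ with $\lim_{y}g(x,y)=f(x)$ for each $x$; applying $\RT^{2}_{k}$ to the pair-coloring $(x,y)\mapsto g(x,y)$ yields an infinite $H$ on which $g$ takes some constant value $j<k$, and then $f(x)=j$ for every $x\in H$, so $H$ is homogeneous for $f$. For $\RT^{2}\vdash\COH$, given $\vec{R}=\langle R_{i}\rangle_{i\in\N}$ I would follow the Cholak/Jockusch/Slaman strategy: design an auxiliary coloring $c$ on pairs that encodes, for $x<y$, the least $i\le x$ at which $R_{i}(x)\neq R_{i}(y)$, with a default value when $x$ and $y$ agree on $R_{0},\ldots,R_{x}$. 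Because the color range depends on $x$, one first extracts an infinite \emph{prehomogeneous} set for $c$ --- an infinite $H$ on which $c(x,y)$ depends only on $x$ for $x<y$ in $H$ --- by applying $\RT^{2}_{k}$ for successively larger $k$ and thinning; from such a prehomogeneous set, a cohesive set for $\vec{R}$ is then extracted by a $\Delta^{0}_{1}$-recursion inside $\RCAo$.

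For the backward direction $\D^{2}+\COH\vdash\RT^{2}$: given $k$ and $c:[\N]^{2}\to k$, set $R_{n,j}=\{y>n:c(n,y)=j\}$ for $n\in\N$ and $j<k$, and apply $\COH$ to $\langle R_{n,j}\rangle$ to obtain an infinite cohesive set $C$. On $C$, for each $n$ the map $y\mapsto c(n,y)$ is eventually constant, defining a stable $\Delta^{0}_{2}$ coloring $\tilde{c}:C\to k$. Applying $\D^{2}$ to $\tilde{c}$ produces an infinite $H_{0}\subseteq C$ on which $\tilde{c}$ is constant with some value $j$, and a final thinning of $H_{0}$ inside $C$, guided by the cohesiveness of $C$ with respect to each $R_{n,j}$, yields an infinite $H\subseteq H_{0}$ genuinely homogeneous for $c$ with color $j$.

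The main obstacle is the step $\RT^{2}\vdash\COH$: the natural auxiliary coloring has a color range that grows with the parameter, so the arbitrary-color form of Ramsey's theorem is genuinely needed, and the passage from a prehomogeneous set to a cohesive one must be organized as a uniform recursion inside $\RCAo$. A secondary delicate point is to verify that the limit coloring $\tilde{c}$ in the backward direction and the final thinning are uniformly $\Delta^{0}_{1}$-definable from the cohesive set $C$, so that every step of the reduction is admissible in the base system.
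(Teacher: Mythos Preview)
The paper does not reprove this theorem; it is quoted as a background result from Cholak--Jockusch--Slaman, so there is no in-paper argument to compare against. Your overall decomposition---$\RT^{2}\Rightarrow\D^{2}$, $\RT^{2}\Rightarrow\COH$, and $\D^{2}+\COH\Rightarrow\RT^{2}$---is the standard one, and your treatments of the first and third implications are essentially correct. (One small point on the third: the final thinning recursion requires, at stage $n$, that $\bigcap_{i\le n}\{y:c(x_{i},y)=j\}$ be cofinite; this is a $\mr{B}\Pi^{0}_{1}$-bounding step, i.e.\ $\BII$, which is not free in $\RCAo$ but does follow from $\D^{2}$.)

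The genuine gap is in your derivation of $\COH$ from $\RT^{2}$. The auxiliary coloring you define has unbounded range, and ``applying $\RT^{2}_{k}$ for successively larger $k$ and thinning'' is an external $\omega$-iteration that does not produce a single infinite set inside a model of $\RCAo$: there is no mechanism to take a limit of the shrinking homogeneous sets, and no single instance of $\RT^{2}_{k}$ covers the full coloring. The standard route avoids this entirely by using one $2$-coloring: for $x<y$ set $d(x,y)=1$ if the $\vec{R}$-type $(R_{0}(x),R_{1}(x),\dots)$ is lexicographically below $(R_{0}(y),R_{1}(y),\dots)$ and $d(x,y)=0$ otherwise, then apply $\RT^{2}_{2}$. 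Any infinite $d$-homogeneous set is lex-monotone in its $\vec{R}$-type, and a lex-monotone sequence in $2^{\N}$ has every coordinate eventually constant, so the set is already cohesive. This shows in fact that $\RT^{2}_{2}$ (not just $\RT^{2}$) suffices for $\COH$, and it stays squarely within $\RCAo$.
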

Here, $\D^{2}$ and $\COH$ are the following statements.
\begin{itemize}
 \item[] $\D^{2}$: for any $k\in \N$ and any $\Delta^{0}_{2}$-partition $\N=\bigsqcup_{i<k} \mc{A}_{i}$, there exists an infinite set $Z\subseteq \N$ such that $Z\subseteq \mc{A}_{i}$ for some $i<k$,
 \item[] $\COH$: for any infinite sequence of sets $\langle R_{i}: i\in\N \rangle$, there exists an infinite set $Z\subseteq \N$ such that $(Z\subseteq^{*} R_{i}\vee Z\subseteq^{*} \N\setminus R_{i})$ for any $i\in\N$.
\end{itemize}
(Note that $\N$ denotes the set of all natural numbers within $\RCAo$, \textit{i.e.}, if $\mc M=(M,S)$ is a model of $\RCAo$, $\N^{\mc M}=M$.)

Since we already know that $\RCAo+\RT^{2}$ implies $\BN[3]$, we will consider the first-order strength of the above two statements over $\BN[3]$.
Note that $\D^{2}$ and $\COH$ are both $\Pi^{1}_{2}$-statements, and $\Pi^{1}_{1}$-conservation results for $\Pi^{1}_{2}$-statements can be amalgamated, \textit{i.e.}, if both of $\RCAo+\BN[3]+\D^{2}$ and $\RCAo+\BN[3]+\COH$ are $\Pi^{1}_{1}$-conservative over $\BN[3]$ then so is $\RCAo+\BN[3]+\D^{2}+\COH$, which is equivalent to $\RCAo+\RT^{2}$ (see \cite{Y2010}).
The strength of $\COH$ (together with weak K\"onig's lemma) over $\BN[3]$ is already known.
\begin{thm}[H\'ajek\cite{Hajek}, Belanger\cite{Belanger}]
$\WKLo+\COH+\BN[3]$ is a $\Pi^{1}_{1}$-conservative extension of $\BN[3]$.
\end{thm}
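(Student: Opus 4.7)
The plan is to combine the decomposition $\RT^{2}\equiv \D^{2}+\COH$ (over $\RCAo$) with the amalgamation property for $\Pi^{1}_{1}$-conservation of $\Pi^{1}_{2}$-statements from \cite{Y2010}: since both $\D^{2}$ and $\COH$ are $\Pi^{1}_{2}$, it suffices to prove that each of $\RCAo+\WKLo+\COH$ and $\RCAo+\WKLo+\D^{2}$ is $\Pi^{1}_{1}$-conservative over $\BN[3]$. The first is exactly the H\'ajek--Belanger theorem quoted above, so the whole weight of the argument falls on the \emph{new} conservation result: $\RCAo+\WKLo+\D^{2}$ is $\Pi^{1}_{1}$-conservative over $\BN[3]$.

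To prove this new result I would argue model-theoretically. Given a $\Pi^{1}_{1}$ sentence $\varphi$ with $\BN[3]\not\vdash \varphi$, take a countable $\mc{M}=(M,S)\models \BN[3]+\neg\varphi$, which must contain a $\Sigma^{1}_{1}$-witness $X\in S$ for $\neg\varphi$. The goal is to build an $\omega$-extension $\mc{M}^{*}=(M,S^{*})$ with $S\subseteq S^{*}$, $X\in S^{*}$, and $\mc{M}^{*}\models \RCAo+\WKLo+\D^{2}$, while keeping the first-order part equal to $M$ (so $\BN[3]$ and $\neg\varphi$ remain true). The construction is an iterated forcing that enumerates all instances of $\D^{2}$ and $\WKL$ available in the partial structure and, at each stage, adjoins an infinite homogeneous set (respectively, an infinite path) together with the Turing ideal it generates, arranging at every step that the first-order theory of the new structure continues to contain $\BN[3]$ relative to all named parameters.

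The crux, and the main obstacle, is the combinatorial preservation lemma: if $(M,S)\models \RCAo+\WKLo+\BN[3]$, $k\in M$, and $\{\mc{A}_{i}\}_{i<k}$ is a $\Delta^{0}_{2}(S)$-partition of $M$, then there exists an infinite $Z\subseteq \mc{A}_{i}$ for some $i<k$ such that $(M,S')\models \BN[3]$, where $S'$ is the Turing ideal generated from $S\cup\{Z\}$. The original CJS proof of Theorem~\ref{thm:CJS-RT2} obtains such a $Z$ using $\IN[3]$, essentially by a low$_{2}$-style basis theorem for $\Pi^{0,S}_{1}$ classes; the new task is to make it go through under the strictly weaker hypothesis $\BN[3]$. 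I expect to model the argument on the Patey--Yokoyama treatment of $\RT^{2}_{2}$ in \cite{PY2018}: set up a Mathias-style forcing whose reservoirs are chosen from a class of sets encoding a $\BN[3]$-preserving basis, and show that every $\Sigma^{0}_{3}$-formula with parameters in $S'$ is controlled by a $\Sigma^{0}_{3}$-formula with parameters in $S$, whose bounding axiom already holds by hypothesis.

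The genuinely new technical issue, compared to the two-color case, is the external parameter $k$ that ranges over all of $M$, including non-standard values: the basis and the forcing notion must be built uniformly in $k$, and the $\Sigma^{0}_{3}$-bounding bookkeeping must continue to work for arbitrary $k\in M$. Handling this uniformity without re-introducing $\IN[3]$, and in particular showing that the homogeneous-set operator does not inadvertently compute a new $\Sigma^{0}_{3}$-unbounded function on a $\Sigma^{0}_{3}$-bounded set, is where I expect the main technical effort to lie. Once this preservation lemma is in hand, iterating it through the countable enumeration of $\D^{2}$- and $\WKL$-instances and taking the limit produces the desired $\mc{M}^{*}$, and conservativity follows.
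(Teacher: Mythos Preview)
Your proposal does not address the statement in question. The theorem you were asked to prove is the H\'ajek--Belanger result that $\WKLo+\COH+\BN[3]$ is $\Pi^{1}_{1}$-conservative over $\BN[3]$; instead, your proposal explicitly \emph{assumes} this result (``The first is exactly the H\'ajek--Belanger theorem quoted above'') and uses it as an ingredient to argue for the paper's main theorem about $\RT^{2}+\WKLo$. You have mistaken the target statement.

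Moreover, the paper itself gives no proof of the H\'ajek--Belanger theorem; it is quoted from the literature with no argument supplied, so there is nothing in the paper to compare your attempt against. If your actual intent was to outline the proof of the paper's main conservation theorem, then your high-level strategy does coincide with the paper's: decompose $\RT^{2}$ into $\D^{2}+\COH$, invoke the $\Pi^{1}_{2}$-amalgamation property, cite H\'ajek--Belanger for the $\COH$ half, and establish the new preservation lemma for $\D^{2}$ (Theorem~\ref{thm:main-construction}) via a Mathias-style forcing that keeps $\Sigma^{0}_{3}$ definability stable. But that is a different theorem from the one stated here.
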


Thus, what we need is the following.
\begin{thm}\label{thm:main-conservation}
$\RCAo+\D^{2}+\BN[3]$ is a $\Pi^{1}_{1}$-conservative extension of $\BN[3]$.
\end{thm}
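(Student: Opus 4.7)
The plan is to use the standard model-theoretic criterion: we must show that every countable $\mc{M}=(M,S)\models\RCAo+\BN[3]$ admits a second-order extension $\mc{M}'=(M,S')\supseteq\mc{M}$ with $\mc{M}'\models\RCAo+\D^2+\BN[3]$ and the same first-order part $M$. Since $\Pi^1_1$-sentences get stronger when $S'$ grows, this suffices. The outer construction is a union of a chain $\mc{M}=\mc{M}_0\subseteq\mc{M}_1\subseteq\cdots$ of models of $\RCAo+\BN[3]$ with first-order part $M$, where at stage $n$ we pick a $\Delta^0_2$-partition appearing in $\mc{M}_n$ (enumerated over all potential countable extensions via a meta-level bookkeeping argument) and adjoin an infinite set homogeneous for it. The colimit $\mc{M}'$ inherits $\BN[3]$ (its first-order part is still $M$) and satisfies $\D^2$, since every $\Delta^0_2$-partition in $\mc{M}'$ already lives in some $\mc{M}_n$ and is solved in $\mc{M}_{n+1}$.

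The real work is a single-step lemma: given $\mc{N}=(M,T)\models\RCAo+\BN[3]$, $k\in M$, and a $\Delta^0_2(\mc{N})$-partition $M=\bigsqcup_{i<k}A_i$, produce an infinite $Z\subseteq A_i$ for some $i<k$ such that the natural extension $\mc{N}[Z]$ still satisfies $\RCAo+\BN[3]$. My approach would be to sharpen the CJS forcing underlying Theorem~\ref{thm:CJS-RT2}. CJS produce a low$_2$ homogeneous set, which preserves the full $\Sigma^0_3$-theory over $M$; this is enough for $\IN[3]$-preservation but more than we need, and the argument is not obviously available over the weaker base $\BN[3]$. Following the spirit of \cite{PY2018}, I would replace the global induction-on-colors step by a finitary ``large-class'' analysis: define a suitable notion of $k$-largeness for the $\Sigma^0_2$-classes used in the forcing, invoke a single instance of $\BN[3]$ in $\mc{N}$ to locate a color $i<k$ admitting an infinite large $\Sigma^0_2$-trace, and then run CJS-style forcing restricted to that trace to obtain $Z\subseteq A_i$ whose adjunction preserves $\BN[3]$.

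The main obstacle is the case of nonstandard $k$. In the CJS argument, one implicitly inducts on the number of surviving color classes through $\Sigma^0_3$-statements about the partition; without $\IN[3]$ this induction has to be replaced by bounded iteration combined with meta-level induction on the syntactic form of the forcing conditions. The hardest single point, I expect, is to verify that no generic $Z$ witnesses a $\Sigma^0_3$-function whose domain is bounded in $M$ but whose range is unbounded, i.e., that $Z$ does not collapse $\BN[3]$. This reduces to an approximation lemma of the form: every $\Sigma^0_3(Z)$-formula over $\mc{N}[Z]$ with parameters from $M$ is, on each $M$-initial segment, equivalent to a $\Sigma^0_3$-formula over $\mc{N}$ derivable using only the $\BN[3]$-collection instances available in $\mc{N}$. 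Proving this uniformly in a possibly nonstandard $k$ is the direct analogue of, and technically more delicate than, the approximation lemma of \cite{PY2018} for $\RT^2_2$ over $\BN[2]$, and it is where the bulk of the combinatorial and proof-theoretic work will lie.
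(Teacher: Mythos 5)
Your outer argument coincides with the paper's own proof of Theorem~\ref{thm:main-conservation}: take a countable model of $\BN[3]$ in which the given $\Pi^{1}_{1}$ sentence fails, repeatedly apply a one-step lemma that, for each $\Delta^{0}_{2}$ partition, adds an infinite homogeneous set whose adjunction preserves $\BN[3]$, and close off by bookkeeping; the paper does exactly this, iterating Theorem~\ref{thm:main-construction} along a chain $B=B_{0}\le_{T}B_{1}\le_{T}\cdots$ and taking the generated ideal. The genuine gap is that this one-step lemma is the entire mathematical content of the theorem (the paper calls it ``essential'' and devotes all of Section~3 to it), and your proposal does not prove it: you yourself defer the key preservation statement --- that every $\Sigma^{0}_{3}$ fact about the generic solution reduces to a $\Sigma^{B}_{3}$ fact, uniformly in a possibly nonstandard number of colors $k$ --- to ``where the bulk of the work will lie.'' A plan that postpones exactly this point restates the problem rather than solving it.

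Moreover, the specific strategy you sketch runs against what the paper identifies as the obstruction. You propose to first ``invoke a single instance of $\BN[3]$ to locate a color $i<k$'' and then run CJS-style forcing inside that color; but the paper points out that finding the right color is precisely one of the two places where Cholak--Jockusch--Slaman use $\IN[3]$ (the other being $\mathbf{0}''$-primitive recursion), and it is not explained how one collection instance could effect this selection over $\BN[3]$. The paper instead keeps finite sets for all $k$ colors simultaneously inside its conditions, controls the double jump by a largeness/fairness calculus combined with a Shore-style blocking step that decides a whole block of $\Sigma^{0}_{2}$ facts at each stage (so that the $B''$-primitive recursion, which may only run along a proper cut $J$ of $M$, still decides everything, Lemmas~\ref{lem:fair-extension}--\ref{lem:unbdd-G}), and only at the very end uses $\BIII$ to extract a color that survives at every stage (Lemmas~\ref{lem:color-survive1} and~\ref{lem:color-survive2}); preservation of $\BN[3]$ then falls out because every $\Pi^{B\oplus G}_{2}$ fact about $G$ is read off the $\Sigma^{B}_{3}$ sequence of conditions, so every $\Sigma^{B\oplus G}_{3}$ set is already $\Sigma^{B}_{3}$, which is Theorem~\ref{thm:main-construction}. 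These ideas --- simultaneous colors with late selection, the largeness/fairness notions, and the blocking device replacing $\IN[3]$ --- are exactly what is missing from your proposal.
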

In \cite{CJS}, it is shown by a variant of Mathias forcing that a computable instance of $\D^{2}$ admits a $\low_{2}$-solution.
On the other hand, $\low_{2}$-sets preserve $\BN[3]$ since they won't add any new $\Sigma^{0}_{3}$-sets.
Thus, the following theorem is essential for Theorem~\ref{thm:main-conservation}.
\begin{thm}\label{thm:main-construction}
Let $(M,\{B\})$ be a countable model of $\BN[3]$, and let $M=\bigsqcup_{i<k} \mc{A}_{i}$ be a $\Delta^{B}_{2}$-partition of $M$ for some $k\in M$.
Then there exists an unbounded $\Delta^{B}_{3}$-set $G\subseteq M$ such that $G\subseteq \mc{A}_{i}$ for some $i<k$, and
any $\Sigma^{B\oplus G}_{3}$ subset of $M$ is already $\Sigma^{B}_{3}$ in $M$.
\end{thm}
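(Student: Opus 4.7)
The plan is to adapt the Mathias-style forcing construction of Cholak--Jockusch--Slaman, which in the standard-model setting produces a $\mathrm{low}_{2}$ solution to a $\Delta^{B}_{2}$-instance of $\D^{2}$, to a countable nonstandard model of $\BN[3]$, along the lines of the Patey--Yokoyama lifting of the CJS argument for $\RT^{2}_{2}$ from $\IN[2]$ down to $\BN[2]$, but one level higher in the arithmetic hierarchy. First, pick $i<k$ with $\mc{A}_{i}$ unbounded in $M$; this is possible because $k\in M$ is finite and $\BN[2]$-collection (which follows from $\BN[3]$) gives unboundedness of some piece.

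A Mathias condition is a pair $(F,X)$ where $F$ is a finite subset of $\mc{A}_{i}$ coded in $M$, $\max F<\min X$, and $X$ is an unbounded subset of $\mc{A}_{i}$ coded as a $\Delta^{B}_{2}$-set, so that membership in $X$ is decidable from $B'$. Extension is the usual $F\subseteq F'\subseteq F\cup X$ with $X'\subseteq X$. I will externally build a descending sequence $(F_{s},X_{s})$ meeting suitable dense sets and set $G=\bigcup_{s}F_{s}$, arranging that the sequence itself can be computed in $B''$; then $G$ is $\Delta^{B}_{3}$, unbounded by density, and contained in $\mc{A}_{i}$ by construction.

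The core technical step is a decision lemma: for each $\Sigma^{B\oplus G}_{3}$-formula $\varphi(n,G)$ and each condition $(F,X)$, the assertion ``some extension of $(F,X)$ forces $\varphi(n,G)$'' is itself $\Sigma^{B}_{3}$ in $(F,X,n)$, uniformly. Given this, meeting the dense set of conditions that decide each such formula ensures that every $\Sigma^{B\oplus G}_{3}$ subset of $M$ is $\Sigma^{B}_{3}$, which is precisely the preservation conclusion of the theorem.

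The main obstacle is justifying the decision lemma under $\BN[3]$ rather than $\IN[3]$. The CJS argument iterates a ``large-set'' construction through several arithmetic quantifier alternations, which in a nonstandard model of $\BN[3]\setminus\IN[3]$ appears to rely on $\Sigma^{0}_{3}$-induction. The workaround, following the Patey--Yokoyama blueprint one level higher, is to replace the implicit $\Sigma^{0}_{3}$-induction by $\BN[3]$-collection: at each step, use collection to uniformly bound the witnesses for the outermost existential quantifier, so that the remaining construction below that bound becomes finitary and can be coded as a single $\Delta^{B}_{2}$-object, inside which one then runs the next quantifier-layer's search. Care must be taken to keep the reservoirs $X_{s}$ uniformly within the chosen $\Delta^{B}_{2}$-class throughout this coding, so that the whole forcing machinery stays computable from $B''$. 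Combining the decision lemma with the standard generic construction produces the desired $G$, proving Theorem~\ref{thm:main-construction}, and, via the amalgamation remark, the main conservation result.
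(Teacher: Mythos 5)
There is a genuine gap, in fact two, and they are exactly at the points the paper identifies as the places where the Cholak--Jockusch--Slaman argument uses $\IN[3]$. First, you fix the color $i$ at the outset by choosing an unbounded piece $\mc{A}_{i}$. Unboundedness of $\mc{A}_{i}$ is nowhere near enough: the Mathias construction with double-jump control can die inside a given color (the reservoir intersected with $\mc{A}_{i}$ can become bounded, or the $\Pi^{0}_{2}$-facts one needs to force for that color can become unforceable), and in CJS the selection of a color that works is itself a $\Sigma^{0}_{3}$-type search, which is precisely what is unavailable over $\BN[3]$. The paper's construction therefore carries a $k$-tuple $\bar F=\langle F_{i}:i<k\rangle$ of candidate solutions, one per color, through the entire forcing, and only at the very end proves (Lemmas on the sets $\eta^{j}$, using $\BN[3]$-collection together with unboundedness of the generic sequence) that some single color survives at every stage. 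Your proposal has no mechanism for recovering from a bad initial choice of $i$, and also note that $k\in M$ may be nonstandard, so treating it as externally finite is not legitimate.

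Second, your treatment of the induction problem is not a workable substitute for what the paper actually does. You propose to ``use collection to uniformly bound the witnesses for the outermost existential quantifier, so that the remaining construction below that bound becomes finitary,'' but the obstruction is not a single unbounded existential witness: it is that the $\mathbf{0}''$-primitive recursion building the condition sequence may only be carried through a proper cut of $M$ under $\BN[3]$. The paper accepts this: it runs a blocking argument in the style of Shore, recording at each step a block $\sigma$ of decided $\Sigma^{0}_{2}$-facts (with bounds $\ell_{0},\ell_{1}$), defines the left-most sequence of conditions whose index set $J$ may be a proper cut, and then proves with $\Sigma^{0}_{2}$-induction that the sequence is unbounded in $M$, so that all $\Sigma^{0}_{2}$-facts about $G$ get decided before the recursion ``runs out.'' This requires the largeness/fairness apparatus and the extension lemma, none of which is replaced by your bounding remark. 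Relatedly, your ``decision lemma'' aims at directly forcing $\Sigma^{B\oplus G}_{3}$-formulas; the correct target is one level lower: one decides all $\Sigma^{B\oplus G}_{2}$/$\Pi^{B\oplus G}_{2}$-facts in a $\Sigma^{B}_{3}$-readable way (double jump control), and the preservation of $\Sigma^{B\oplus G}_{3}$ then follows. As written, your outline would need $\Sigma^{0}_{3}$-induction at both of the places you wave at, which is exactly what the theorem forbids.
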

We will prove this theorem in the next section.
Assuming this theorem, it is routine work to prove Theorem~\ref{thm:main-conservation}.
\begin{proof}[\it Proof of Theorem~\ref{thm:main-conservation}.]
Assume that $\BN[3]$ does not prove a $\Pi^{1}_{1}$-sentence $\A X\psi(X)$.
Then there exists a countable model $(M,S)\models \BN[3]$ such that $(M,S)\models \neg\psi(B)$ for some $B\in S$.
For $X,Y\subseteq M$, $X\le_{T} Y$ means that $X$ is $\Delta^{Y}_{1}$ in $M$.
By using Theorem~\ref{thm:main-construction} repeatedly, one can construct an $\omega$-length sequence of subsets of $M$, $B=B_{0}\le_{T}B_{1}\le_{T} \dots$
%
 so that
\begin{itemize}
 \item for any $m\in\omega$ and $\Delta^{B_{m}}_{2}$-partition $M=\bigsqcup_{i<k} \mc{A}_{i}$, there exist $n\ge m$ and an unbouded set $G\le_{T} B_{n}$ such that $G\subseteq \mc{A}_{i}$ for some $i<k$, and,
 \item any $\Sigma^{B_{m}}_{3}$ subset of $M$ is already $\Sigma^{B}_{3}$ in $M$.
\end{itemize}
Put $\bar S=\{X\subseteq M: X\le_{T} B_{m}, m\in\omega\}$, then $(M,\bar S)\models \RCAo+\D^{2}+\BN[3]$ but $\neg\psi(B)$ is still true in $(M,\bar S)$.
Hence $\RCAo+\D^{2}+\BN[3]$ does not prove $\A X\psi(X)$.
\end{proof}

\section{Construction}
In this section, we will prove Theorem~\ref{thm:main-construction}.
The main idea is formalizing a computability theoretic construction within a nonstandard model of arithmetic.
The following theorem is a basic tool to formalize standard arguments for $\Pi^{0}_{1}$-classes, and we will use it freely throughout this section.
\begin{thm}\label{thm:Pi01-in-WKL}
Let $\varphi(X,A)$ be a $\Pi^{0}_{1}$-formula with exactly displayed the set variables.
\begin{enumerate}
 \item There exists a $\Pi^{0}_{1}$-formula $\psi(A)$ such that $\WKLo$ proves $\E X\varphi(X,A)\leftrightarrow \psi(A)$.
 \item $\WKLo$ proves that $\E X\varphi(X,A)$ is equivalent to the statement that there exists (a $\Delta^{A}_{2}$-code for) a low set $Y$ relative to $A$ such that $\varphi(Y,A)$.
 \item For a given $\Delta^{0}_{2}$-definable set $\mc{A}$ (possibly not a second-order object), $\WKLo+\BII$ proves $\E X\varphi(X,\mc{A})\to \E X\E Y\varphi(X,Y)$.
 Thus, ``there exists $\Delta^{0}_{2}$-definable set $\mc{A}$ such that $\E X\varphi(X,\mc A)$'' can be described by a $\Pi^{0}_{1}$-formula.
\end{enumerate}
\end{thm}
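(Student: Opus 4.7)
The plan for all three parts is to pass to the standard tree presentation of the $\Pi^{0}_{1}$-class defined by $\vphi$. I write $\vphi(X,A)$ in normal form $\forall n\,\theta(X,A,n)$ with $\theta$ bounded and depending only on $X\rest n$ and $A\rest n$, and set $T_{A}=\{\sigma\in 2^{<\N}:\forall n\leq |\sigma|\,\theta(\sigma,A,n)\}$, an $A$-recursive subtree of $2^{<\N}$. Under $\WKLo$, $T_{A}$ has an infinite path iff it is infinite, and ``$T_{A}$ is infinite'' can be written $\forall k\,\E\sigma<2^{k+1}\,(|\sigma|=k\wedge\sigma\in T_{A})$; since the existential is bounded and the matrix is $\Sigma^{0}_{0}$ in $A$, this is a $\Pi^{0}_{1}$-formula $\psi(A)$, settling (1).

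For (2), I invoke the low basis theorem as formalized inside $\WKLo$: every infinite $A$-recursive binary tree has a path $Y$ with $Y'\leq_{T} A'$. The Jockusch--Soare construction iteratively refines subtrees using only an $A'$-oracle, producing a $\Delta^{A'}_{1}$ --- hence $\Delta^{A}_{2}$ --- index that describes $Y$ bit by bit. Applied to $T_{A}$, this gives the desired low-over-$A$ witness of $\vphi(Y,A)$. Conversely, any such code immediately establishes $\E X\,\vphi(X,A)$ via the $\psi(A)$ from (1).

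For (3), assume $\mc A$ is $\Delta^{0}_{2}$-definable, say $\mc A(n)=\lim_{s}f(n,s)$ for recursive $f$, and that $\E X\,\vphi(X,\mc A)$. I form the recursive pair-tree
\[
 U=\{(\sigma,\tau)\in(2\times 2)^{<\N}:|\sigma|=|\tau|\text{ and }\forall n\leq|\sigma|\,\theta(\sigma,\tau,n)\},
\]
and aim to show $U$ is infinite. For each $k$, $\BII$ yields a stage $s_{k}$ after which $f(n,t)$ is stable for every $n<k$, i.e., $f(\cdot,s_{k})\rest k=\mc A\rest k$; this is bounded collection applied to the $\Sigma^{0}_{2}$-formula $\E s\,\forall t\geq s\,(f(n,t)=f(n,s))$ across $n<k$. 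Picking $\sigma_{k}$ any length-$k$ node of $T_{\mc A}$ (which exists by (1) applied to $\mc A$) and $\tau_{k}=f(\cdot,s_{k})\rest k$, the normal form for $\theta$ guarantees $(\sigma_{k},\tau_{k})\in U$. Then $\WKLo$ supplies an infinite path $(X,Y)\in[U]$, so $\vphi(X,Y)$ holds for second-order $X,Y$. The main obstacle will be precisely this third part --- checking that $\BII$ really suffices to localize the agreement of $\tau$ with $\mc A$ uniformly in $k$, and that the normal-form assumption propagates correctly --- while (1) and (2) are standard translations of classical computability-theoretic facts into $\WKLo$.
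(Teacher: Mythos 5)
Your proposal is correct and follows essentially the same route as the paper, which simply cites the standard facts for (1) and (2) (the normal-form tree argument of \cite[Lemma VIII.2.4]{SOSOA} and the low basis theorem formalized within $\II$) and proves (3) by mimicking (1) for $\Delta^{0}_{2}$-sets using $\BII$ --- exactly your stabilization argument producing level-$k$ nodes of the pair-tree and then applying weak K\"onig's lemma. One small repair: the length-$k$ node $\sigma_{k}$ should be taken as $X\rest k$ for the assumed witness $X$ of $\E X\varphi(X,\mc{A})$ rather than ``by (1) applied to $\mc{A}$'' (since (1) concerns genuine second-order parameters, not $\Delta^{0}_{2}$-definable ones), after which the verification that $(\sigma_{k},\tau_{k})\in U$ goes through as you describe.
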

\begin{proof}
1 is a well-known fact, see, e.g., \cite[Lemma~VIII.2.4]{SOSOA}.
2 is a low basis theorem for $\Pi^{0}_{1}$-classes which is formalizable within $\II$ \cite{HK1989}.
With $\BII$, one can mimic the proof of 1 for $\Delta^{0}_{2}$-sets, 3 easily follows from that.
\end{proof}

As we mentioned in the previous section, we want to formalize the second $\low_{2}$-solution construction for $\D^{2}$ from \cite{CJS} within $\BN[3]$.
However, that construction uses $\IN[3]$ in two parts,
 to find the right color for a solution, and to do $\mathbf{0}''$-primitive recursion.
In the following construction, we need to avoid these.
To overcome the first problem, we will construct solutions for all possible colors, and see that it works for at least one color in the end.
For the second problem, we will still use $\mathbf{0}''$-primitive recursion.
In a nonstandard model $(M,S)\models\BN[3]$, $\mathbf{0}''$-primitive recursion might end in nonstandard numbers of steps which form a proper cut of $M$.
Thus, we will decide some finite collection of $\Sigma^{0}_{2}$-statements at each step,
and finally decide all $\Sigma^{0}_{2}$-statements before $\mathbf{0}''$-primitive recursion ends,
 adapting Shore's blocking argument.

Now we start the construction.
Let $(M,\{B\})$ be a countable model of $\BN[3]$.
By the following theorem, we will work  within $(M,S)\models \WKLo+\BIII$ with $B\in S$.
\begin{thm}[H\'ajek\cite{Hajek}]
Let $(M,\{B\})$ be a countable model of $\BN[3]$.
Then there exists $S\subseteq \mc{P}(M)$ such that $B\in S$ and $(M,S)\models \WKLo+\BN[3]$. 
\end{thm}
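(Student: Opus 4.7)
The plan is to build $S$ as the Turing ideal generated in $\mc{P}(M)$ by an $\omega$-chain $B=B_{0}\leq_{T} B_{1}\leq_{T}\cdots$ of subsets of $M$, where each successor step adds a path through an infinite binary tree that is low over the current base. Fix a dovetailed enumeration of pairs $(n,e)\in\omega^{2}$; at stage $s$, handle the next $(n,e)$ with $n\leq s$: if the $e$-th $\Delta^{B_{n}}_{1}$-definable binary tree $T$ in $M$ is $M$-infinite, apply the low basis theorem (externally, to the countable model $(M,\{B_{s}\})$) to obtain $P_{s}\subseteq M$ through $T$ with $(P_{s}\oplus B_{s})'\leq_{T} B_{s}'$, and set $B_{s+1}=B_{s}\oplus P_{s}$; otherwise set $B_{s+1}=B_{s}$. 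Finally let $S=\{X\subseteq M:X\leq_{T} B_{s}\text{ for some }s\in\omega\}$.

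The crux of the verification is a syntactic low-ness calculation. If $P$ is low over $Y$ as above, then every $\Sigma^{P\oplus Y}_{1}$-subset of $M$ is $\Delta^{Y}_{2}$ in $M$, being computable from $(P\oplus Y)'\leq_{T} Y'$. Hence $\Sigma^{P\oplus Y}_{2}\subseteq\Sigma^{Y}_{2}$ by absorbing the outer existential quantifier, and consequently $\Sigma^{P\oplus Y}_{3}\subseteq\Sigma^{Y}_{3}$. Iterating along the chain yields $\Sigma^{B_{s}}_{3}=\Sigma^{B}_{3}$ in $M$ for every $s\in\omega$; so for every $Y\in S$, any $\Sigma^{0}_{3}$-formula with parameter $Y$ is equivalent in $M$ to one with parameter $B$. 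Therefore $(M,S)\models\BN[3]$ follows from $(M,\{B\})\models\BN[3]$. The same argument at lower levels, combined with $\BN[3]\vdash\III$, propagates $\II$ (and hence the closure axioms of $\RCAo$) along the chain. Weak K\"onig's lemma is immediate: any infinite binary tree in $S$ is $\Delta^{B_{n}}_{1}$-definable for some $n$ and so receives a path at some stage.

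The main obstacle is supplying the external low basis theorem at each stage: the standard Jockusch--Soare tree-pruning construction needs only $\II$ relative to the base parameter to execute inside $M$, and this is sustained along the chain by the syntactic reductions above. A secondary bookkeeping point is ensuring every tree in the eventual $S$ is considered at some finite stage, which is handled by the dovetailing over pairs $(n,e)$. No induction beyond what $\BN[3]$ supplies is ever invoked, since each propagation step along the chain is a finite syntactic reduction in the language of arithmetic.
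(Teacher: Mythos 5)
Your proposal is correct and is essentially the standard argument behind the cited result of H\'ajek (which the paper does not reprove): iterate the low basis theorem, formalized under $\II$ relative to the current base as in H\'ajek--Ku\v{c}era (the paper's Theorem 3.1.2), to add paths through all relevant trees, and observe that low extensions add no new $\Sigma^{0}_{3}$ sets, so $\BN[3]$ and $\II$ persist and the Turing ideal generated satisfies $\WKLo+\BN[3]$. The only caveat is one of phrasing: ``$(P_{s}\oplus B_{s})'\le_{T}B_{s}'$'' has no literal meaning in $M$ since these jumps are not sets of the model; lowness must be taken in the internal sense of a $\Delta^{B_{s}}_{2}$-code deciding all $\Sigma^{P_{s}\oplus B_{s}}_{1}$ facts for all (possibly nonstandard) indices, which is exactly what the internally executed Jockusch--Soare construction you invoke provides.
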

In what follows, we will mimic the ``double jump control'' method in \cite{CJS}.
Let $\bigsqcup_{i<k}\mc{A}_{i}=M$ be a $\Delta^{B}_{2}$-partition for some $k\in M$ and $B\in S$.
A quintuple $p=(\bar{F}, X, \sigma, \ell_{0},\ell_{1})$ is said to be a \textit{pre-condition} if
\begin{itemize}
 \item $\ell_{0},\ell_{1}\in M$, $\sigma:\ell_{0}\times k\to 2$,
 \item $\bar{F}$ is a $k$-tuple of finite sets $\langle F_{i}: i<k \rangle$ such that $F_{i}\subseteq \mc{A}_{i}$,
 \item $X$ is coded by $\ell_{1}$ and (a $\Delta^{B}_{2}$ code for) an infinite $\low^{B}$ set $X_{0}$ as $X=X_{0}\cap(\ell_{1},\infty)$,
 \item $\max \bar{F}\cup \{\ell_{0}\}<\ell_{1}$, and a code for $X_{0}$ is bounded by $\ell_{1}$.
\end{itemize}
Here, we call a pair of $k$-tuple of finite sets and another set $(\bar{F},X)$ with $\min X>\max\bar{F}$ a Mathias pair.
(In what follows, we will mainly deal with an infinite Mathias pair, \textit{i.e.}, a Mathias pair with $X$ infinite, but quantification for Mathias pairs ranges over possibly finite Mathias pairs.)
For finite sets $E,F$ and another set $X$, we write $E\in (F,X)\leftrightarrow F\subseteq E\subseteq F\cup X$.
For two Mathias pairs $(\bar{F},X),(\bar{E}, Y)$, we say that $(\bar{E}, Y)$ \textit{extends} $(\bar{F},X)$ (write $(\bar{F},X)\ge (\bar{E}, Y)$) if $E_{i}\in (F_{i},X)$ for every $i<k$, and $Y\subseteq X$.

Next, we define how Mathias pairs force $\Sigma^{0}_{1}$ and $\Sigma^{0}_{2}$-formulas at each color.
To control the complexity of forcing formulas, we consider a triple of the form $(\bar{F},X,\ell)$, which is a Mathias pair $(\bar{F},X)$ with a bound $\ell\in M$.
Let $\theta(n, G[n])$ be a $\Sigma^{0}_{0}$-formula with a new variable $G$.
Then we define strong forcing $\Vdash^{+}$ for a pair of color $i$ and a $\Sigma^{0}_{1}$-formula $\E n\,\theta(n, G[n])$ as
$$(\bar{F},X,\ell)\Vdash^{+}\langle i,\E n\,\theta(n, G[n]) \rangle\Leftrightarrow \E n\le \max F_{i}\,\theta (n, F_{i}[n]).$$
Similarly, let $\theta(m,n, G[n])$ be a $\Sigma^{0}_{0}$-formula with a new variable $G$.
Then we define forcing $\Vdash$ for a pair of color $i$ and a $\Sigma^{0}_{2}$-formula $\E m\A n\,\theta(m,n, G[n])$ as
$$(\bar{F},X,\ell)\Vdash\langle i,\E m\A n\,\theta(m,n,G[n]) \rangle\Leftrightarrow \E m\le \ell\,\A E\in (F_{i},X)\A n\le\max E\,\theta (m,n, E[n]).$$

Let $\pi(e, m, G)\equiv \A n\, \pi_{0}(e,m,n,G[n])$ be a universal $\Pi^{B,G}_{1}$-formula, \textit{i.e.}, a universal $\Pi^{0}_{1}$-formulas with a new set variable $G$ (and a set parameter $B$).
For a finite partial function $\sigma\subseteq M\times k\to 2$, we let
\begin{align*}
\sigma_{+}&:=\{\langle i, \E m\,\pi(e,m,G) \rangle: \sigma(e,i)=1\},\\
\sigma_{+,i,\le \ell}&:=\{\langle i, \E m(\pi(e,m,G)\wedge m\le \ell) \rangle: \sigma(e,i)=1\},\\
\sigma_{-}&:=\{\langle i, \E m\,\pi(e,m,G) \rangle: \sigma(e,i)=0\}.
\end{align*}

\begin{defi}[largeness]
Let $\sigma$ be a finite partial function $\sigma\subseteq M\times k\to 2$.
\begin{enumerate}
 \item A Mathias pair $(\bar{F},X)$ is said to be \textit{$\sigma$-large} if for any finite sets of (possibly finite) Mathias pairs $\{(\bar{E}^{t}, Y^{t})\}_{t<s}$ and any bound $\ell'\in M$ such that for all $t<s$ and for all $i<k$, ${E}^{t}_{i}\subseteq \mc{A}_{i}$, $(\bar{E}^{t}, Y^{t})\le (\bar{F}, X)$, $\ell'\ge \max\bar{E}^{t}$, and $X\supseteq \bigsqcup_{t<s}Y^{t}\supseteq X\setminus \ell'$ (\textit{i.e.}, $Y^{t}$'s partition a superset of $X\setminus \ell'$ which is included in $X$),
 there exists $t<s$ such that $(\bar{E}^{t}, Y^{t},\ell')\not\Vdash\langle i,\psi \rangle$ for any $\langle i,\psi \rangle\in \sigma_{+}$ and $Y^{t}$ is not bounded by $\ell'$.
 \item Let $i<k$, $\ell\in M$. Then a Mathias pair $(\bar{F},X\cap \mc{A}_{i})$ is said to be \textit{$\sigma$-large at $i$ up to $\ell$} if the largeness holds for $\sigma_{+,i,\le\ell}$ instead of $\sigma_{+}$ with considering all possible $\Delta^{0}_{2}$-definable sets for $Y^{t}$'s.
Formally, $(\bar{F},X\cap \mc{A}_{i})$ is \textit{$\sigma$-large at $i$ up to $\ell$} if
 for any $\Delta^{0}_{2}$-definable finite sets of Mathias pairs $\{(\bar{E}^{t}, Y^{t})\}_{t<s}$ and any bound $\ell'\in M$ such that for all $t<s$, ${E}^{t}_{i}\subseteq \mc{A}_{i}$, $(\bar{E}^{t}, Y^{t})\le (\bar{F}, X\cap\mc{A}_{i})$, $\ell'\ge \max\bar{E}^{t}$, and $X\cap \mc{A}_{i}\supseteq \bigsqcup_{t<s}Y^{t}\supseteq (X\cap \mc{A}_{i})\setminus \ell'$,
 there exists $t<s$ such that $(\bar{E}^{t}, Y^{t},\ell')\not\Vdash\langle i,\psi \rangle$ for any $\langle i,\psi \rangle\in \sigma_{+,i,\le\ell}$ and $Y^{t}$ is not bounded by $\ell'$.
(Here, we consider all $\Delta^{0}_{2}$-definable sets in $(M,S)$ with any parameters from $S$. Be aware that we do not restrict to $\Delta^{B}_{2}$-sets.)
 %
\end{enumerate}
\end{defi}
Roughly speaking, $\sigma$-largeness guarantees that one can find an extension without forcing any $\langle i,\psi \rangle\in \sigma_{+}$ in the future construction.
\begin{rem}\label{rem:largeness}
\begin{enumerate}
 \item The notion ``$(\bar{F},X)$ is $\sigma$-large'' won't be changed whether we consider Mathias pairs $(\bar{E}^{t}, Y^{t})$ with $Y^{t}$ being a set in the structure or a $\Delta^{0}_{2}$-definable set by Theorem~\ref{thm:Pi01-in-WKL}.3, and it is described by a $\Pi^{B}_{2}$-formula.
 \item For the case ``$(\bar{F},X\cap \mc{A}_{i})$ is $\sigma$-large at $i$ up to $\ell$'', it is essential to consider $\Delta^{0}_{2}$-definable sets, and thus the statement cannot be described by a $\Pi^{B}_{2}$-formula.
In the following construction (which will be $B''$-primitive recursive), we will avoid checking this requirement directly.
\end{enumerate}
\end{rem}

\begin{defi}[condition]
A pre-condition $p=(\bar{F}^{p}, X^{p}, \sigma^{p}, \ell_{0}^{p},\ell_{1}^{p})$ is said to be a \textit{condition} if
\begin{enumerate}
 \item $(\bar{F}^{p}, X^{p})$ is $\sigma^{p}$-large,
 \item $(\bar{F}^{p}, X^{p},\ell_{1}^{p})\Vdash \langle i,\psi \rangle$ for any $\langle i,\psi \rangle\in \sigma^{p}_{-}$,
 \item if $(\bar{F}^{p}, X^{p}\cap \mc{A}_{i})$ is $\sigma^{p}$-large at $i$ up to $\ell_{0}^{p}$, then,
 $\A m\le \ell_{0}^{p}$, $(\bar{F}^{p}, X^{p},\ell_{1}^{p})\Vdash^{+} \langle i, \neg\pi(e,m,G) \rangle$ for any $e\le \ell_{0}^{p}$ with $\sigma^{p}(e,i)=1$.
\end{enumerate}
Define $\mathbb{P}$ as the set of all conditions.
For given two conditions $p,q\in \mathbb{P}$, $q$ properly extends $p$ ($p\succ q$) if
$$(\bar{F}^{p}, X^{p})\ge(\bar{F}^{q}, X^{q})\wedge \ell_{1}^{p}\le \ell_{0}^{q}\wedge \sigma^{p}\subseteq\sigma^{q}.$$
\end{defi}

For a given condition $p=(\bar{F}^{p}, X^{p}, \sigma^{p}, \ell_{0}^{p},\ell_{1}^{p})$, we want to find an extension of $p$.
For this, we introduce a weaker version of the largeness notion.
\begin{defi}[fairness]
Let $\sigma$ be a finite partial function $\sigma\subseteq M\times k\to 2$.
A Mathias pair $(\bar{F},X)$ is said to be \textit{$\sigma$-fair} if
\begin{itemize}
 \item[$(\dag)$] there exist a finite set of Mathias pairs $\{(\bar{E}^{t}, Y^{t})\}_{t<s}$ and a bound $\ell'\in M$ such that ${E}^{t}_{i}\subseteq \mc{A}_{i}$, $(\bar{E}^{t}, Y^{t})\le (\bar{F}, X)$, $\ell'\ge \max\bar{E}^{t}$, $X\supseteq \bigsqcup_{t<s}Y^{t}\supseteq X\setminus \ell'$ such that for any $t<s$,
\begin{itemize}
 \item if $(\bar{E}^{t}, Y^{t},\ell')\not\Vdash\langle i,\psi \rangle$ for any $\langle i,\psi \rangle\in \sigma_{+}$, then $(\bar{E}^{t}, Y^{t},\ell')\Vdash\langle i,\psi \rangle$ for every $\langle i,\psi \rangle\in \sigma_{-}$, or,
 \item $Y^{t}$ is bounded by $\ell'$,
\end{itemize}
 and,
 \item[$(\dag\dag)$]  for any finite set of Mathias pairs $\{(\bar{E}^{t}, Y^{t})\}_{t<s}$ and a bound $\ell'\in M$ which witness the condition $(\dag)$,
 there exists $t<s$ such that $(\bar{E}^{t}, Y^{t},\ell')\not\Vdash\langle i,\psi \rangle$ for any $\langle i,\psi \rangle\in \sigma_{+}$ and $Y^{t}$ is not bounded by $\ell'$.
\end{itemize}
Note that ``$(\bar{F},X)$ is $\sigma$-fair'' can be described by a boolean combination of $\Sigma^{B}_{2}$ and $\Pi^{B}_{2}$ formulas.
\end{defi}

\begin{lem}[$\WKLo+\BIII$]\label{lem:fair-extension}
Let $p=(\bar{F}^{p}, X^{p}, \sigma^{p}, \ell_{0}^{p},\ell_{1}^{p})$ be a condition, and let $\ell'\ge \ell_{1}^{p}$.
Then $(\bar{F}^{p}, X^{p})$ is $\tau$-fair for some $\tau:\ell'\times k\to 2$ extending $\sigma^{p}$.
Moreover, one can find a lexicographically maximal such $\tau$. 
\end{lem}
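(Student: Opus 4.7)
The plan is a two-stage greedy construction: first build a lex-max $\tau$ preserving $\tau$-largeness, then derive $\tau$-fairness by combining the non-largeness certificates arising from the fallback steps.

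\emph{Stage 1.} Enumerate the pairs $(e,i) \in \ell' \times k$ outside $\dom \sigma^p$ in lex order, set $\tau_0 := \sigma^p$, and at stage $j$ with pair $(e_j, i_j)$, first check whether setting $\tau_{j+1}(e_j, i_j) := 1$ keeps $(\bar F^p, X^p)$ being $\tau_{j+1}$-large; if so, adopt this assignment, otherwise set $\tau_{j+1}(e_j, i_j) := 0$. The fallback always preserves largeness because ``$\sigma$-large'' depends only on $\sigma_+$ and setting a value to $0$ only enlarges $\sigma_-$. The resulting $\tau$ is thus lex-maximal among extensions of $\sigma^p$ to $\ell' \times k$ for which $(\bar F^p, X^p)$ remains $\tau$-large. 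Each step tests a $\Pi^B_2$-statement (Remark~\ref{rem:largeness}(1), with Theorem~\ref{thm:Pi01-in-WKL}(3) accommodating $\Delta^0_2$-definable partitions), the induction has bounded length $\ell' \cdot k$, and is well within the scope of $\WKLo + \BIII$.

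\emph{Stage 2.} Clause $(\dag\dag)$ of fairness is immediate from $\tau$-largeness, since partitions satisfying $(\dag)$ form a subclass of those quantified over by largeness. For $(\dag)$, set $J_0 := \{j : \tau(e_j, i_j) = 0\}$. For each $j \in J_0$, the failure of largeness after trying value $1$ furnishes a partition $P_j = \{(\bar E^{j,t}, Y^{j,t})\}_{t<s_j}$ of $(\bar F^p, X^p)$ with bound $\ell'_j \geq \ell_1^p$ (enlarging the bound if necessary) in which every unbounded piece forces some member of $\tau_+ \cup \{\langle i_j, \E m\, \pi(e_j, m, G)\rangle\}$. Form a common refinement $P$ indexed by $\vec t = (t_j)_{j \in J_0}$ with $Y^{\vec t} := \bigcap_j Y^{j, t_j}$, $E^{\vec t}_i := \bigcup_j E^{j, t_j}_i$, and bound $\ell^* := \max_j \ell'_j$. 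Applying $\tau$-largeness to $P$ with bound $\ell^*$ yields an unbounded piece $(\bar E^*, Y^*)$ avoiding $\tau_+$. Then for each $j \in J_0$, the parent piece $(\bar E^{j, t_j}, Y^{j, t_j})$ also avoids $\tau_+$ (by the contrapositive of forcing-monotonicity) and is unbounded (as $Y^* \subseteq Y^{j, t_j}$), hence must force $\langle i_j, \E m\, \pi(e_j, m, G)\rangle$; this forcing propagates to $(\bar E^*, Y^*)$ by monotonicity. The remaining elements of $\tau_-$ lie in $\sigma^p_-$ and are forced at $(\bar F^p, X^p, \ell_1^p)$ by condition~(2) on $p$, hence at $(\bar E^*, Y^*, \ell^*)$ by monotonicity. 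Thus $P$ witnesses $(\dag)$.

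\emph{Main obstacle.} The delicate point is the refinement bookkeeping. The core fact is that $(E'_i, Y') \subseteq (E_i, Y)$ as sets of finite sets whenever $E_i \subseteq E'_i \subseteq E_i \cup Y$ and $Y' \subseteq Y$, which makes the quantifier $\forall E \in (E_i, Y)$ in the $\Sigma^0_2$-forcing clause monotone under refinement: parent-to-child inherits forcing, and child-to-parent inherits the absence of forcing, which is exactly what the argument requires. Ancillary matters---simultaneously selecting the $P_j$'s from an $\Sigma^B_2$-definable class and coding the at most $\prod_j s_j$ pieces of $P$ in the model---are handled by bounded $\Sigma^B_2$-collection (available in $\BIII$) together with the set-existence supplied by $\WKLo$.
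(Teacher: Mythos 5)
Your Stage~1 is fine as far as it goes (largeness depends only on $\sigma_{+}$, so the greedy ``try $1$, else $0$'' procedure preserves largeness and is lex-maximal \emph{among large extensions}), but Stage~2 has a genuine gap at its central step: the common refinement you build is not an extension of the certificate pieces, so neither of the two monotonicity transfers you invoke is available. Your own ``core fact'' needs $E^{j,t_j}_{i}\subseteq E^{\vec t}_{i}\subseteq E^{j,t_j}_{i}\cup Y^{j,t_j}$, and with $E^{\vec t}_{i}:=\bigcup_{j'}E^{j',t_{j'}}_{i}$ the second inclusion fails in general: the finite parts $E^{j',t_{j'}}_{i}$ of the \emph{other} certificates contain elements of $X^{p}$ that were chosen with no coordination with $P_{j}$, and such elements may lie below $\ell'_{j}$ or inside a piece $Y^{j,t}$ with $t\neq t_{j}$, hence outside $E^{j,t_j}_{i}\cup Y^{j,t_j}$. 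Consequently $(\bar{E}^{\vec t},Y^{\vec t})\le(\bar{E}^{j,t_j},Y^{j,t_j})$ need not hold, so you cannot conclude that the parent avoids $\tau_{+}$ because the child does, nor that the parent's forcing of $\langle i_{j},\E m\,\pi(e_{j},m,G)\rangle$ propagates to the child; the $\A E\in(\,\cdot\,,\cdot\,)$ quantifier in the $\Sigma^{0}_{2}$-forcing clause ranges over genuinely new candidate sets at the child. There is no easy repair inside your scheme (shrinking the child's finite part to $\bar{F}^{p}$ destroys the inclusion in the other direction), because you have no control over the finite parts handed to you by the failures of largeness at different stages. Two smaller points: $(\dag)$ is a universal statement over all pieces of the witnessing family, whereas you verify the required property only for the single piece $(\bar{E}^{*},Y^{*})$ produced by largeness (this would be repairable if the transfers worked, since the argument is uniform in the piece); and the ``Moreover'' clause asks for a lexicographically maximal $\tau$ among \emph{fair} extensions, while you produce the maximum among \emph{large} extensions, and the two notions do not obviously coincide.

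The paper avoids exactly this merging problem by running the greedy induction on fairness itself, one bit at a time: if $(\bar{F}^{p},X^{p})$ is $\sigma'$-fair but not $\sigma'\cup\{(e_{0},i_{0},1)\}$-fair, then (since every $(\dag)$-witness for $\sigma'$ is one for $\sigma'\cup\{(e_{0},i_{0},1)\}$) it is $(\dag\dag)$ that fails, and the single family witnessing that failure already witnesses $(\dag)$ for $\sigma'\cup\{(e_{0},i_{0},0)\}$, with $(\dag\dag)$ for the $0$-assignment inherited from $\sigma'$. Thus each fallback step carries its own certificate forward immediately, and no cross-stage combination of incompatible Mathias pairs is ever required; the induction is on a Boolean combination of $\Sigma^{B}_{2}$ and $\Pi^{B}_{2}$ formulas, which is what $\BIII$ is used for. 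If you want to keep your two-stage architecture you would have to replace largeness by fairness in Stage~1, at which point you have reproduced the paper's proof.
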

\begin{proof}
Since $p$ is a condition, $(\bar{F}^{p}, X^{p})$ is $\sigma^{p}$-fair.
We will see by $\Sigma^{0}_{2}$-induction that for any finite set $H\subseteq M\times k$, there exists $\tau:\dom(\sigma^{p})\cup H\to 2$ such that $\tau\supseteq \sigma^{p}$ and $(\bar{F}^{p}, X^{p})$ is $\tau$-fair.
For this, we only need to see that for any $\sigma'$ extending $\sigma^{p}$ such that $(\bar{F}^{p}, X^{p})$ is $\sigma'$-fair and $(e_{0},i_{0})\in M\times k\setminus \dom(\sigma')$, either $\sigma'\cup\{(e_{0},i_{0},0)\}$ or $\sigma'\cup\{(e_{0},i_{0},1)\}$ satisfies the fairness condition for $(\bar{F}^{p}, X^{p})$.
Assume that $(\bar{F}^{p}, X^{p})$ is not $\sigma'\cup\{(e_{0},i_{0},1)\}$-fair.
Since any finite set of Mathias pairs and a bound which witness the condition $(\dag)$ for $(\bar{F}^{p}, X^{p})$ to be $\sigma'$-fair actually witness $(\dag)$ for $(\bar{F}^{p}, X^{p})$ to be $\sigma'\cup\{(e,i,1)\}$-fair, the condition $(\dag\dag)$ for $(\bar{F}^{p}, X^{p})$ to be $\sigma'\cup\{(e_{0},i_{0},1)\}$-fair must fail.
Thus, there exist a finite set of Mathias pairs $\{(\bar{E}^{t}, Y^{t})\}_{t<s}$ and a bound $\ell'\in M$ which witness the condition $(\dag)$ for $\sigma'\cup\{(e,i,1)\}$
such that for any $t<s$, $(\bar{E}^{t}, Y^{t},\ell')\Vdash\langle i,\psi \rangle$ for some $\langle i,\psi \rangle\in \sigma'_{+}\cup\{\langle i_{0}, \E m\,\pi(e_{0},m,G) \rangle\}$ or $Y^{t}$ is bounded by $\ell'$.
Thus, for any $t<s$, if $Y^{t}$ is not bounded by $\ell'$, then $(\bar{E}^{t}, Y^{t},\ell')\not\Vdash\langle i,\psi \rangle$ for any $\langle i,\psi \rangle\in \sigma'_{+}$ implies $(\bar{E}^{t}, Y^{t},\ell')\Vdash\langle i,\psi \rangle$ for any $\langle i,\psi \rangle\in \sigma'_{-}\cup\{\langle i_{0}, \E m\,\pi(e_{0},m,G) \rangle\}$.
This means $\{(\bar{E}^{t}, Y^{t})\}_{t<s}$ and $\ell'$ witness the condition $(\dag)$ for $(\bar{F}^{p}, X^{p})$ to be $\sigma'\cup\{(e_{0},i_{0},0)\}$-fair.
The condition $(\dag\dag)$ for $\sigma'\cup\{(e_{0},i_{0},0)\}$ is automatically satisfied by the same condition for $\sigma'$.
\end{proof}

\begin{lem}[$\WKLo+\BIII$]\label{lem:left-most-extension}
For any $p\in\mathbb{P}$, there exists $q\in \mathbb{P}$ such that $q\prec p$.
Moreover, one can construct such an extension in a ``left-most'' way, \textit{i.e.}, there is a canonical definable way to choose needed elements in the construction of an extension.
\end{lem}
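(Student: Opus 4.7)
The plan is to maximize first, then extract. Fix $\ell_{0}^{q} := \ell_{1}^{p}$ and apply Lemma~\ref{lem:fair-extension} to obtain the lexicographically maximal $\tau : \ell_{0}^{q} \times k \to 2$ extending $\sigma^{p}$ for which $(\bar{F}^{p}, X^{p})$ is $\tau$-fair. Using the canonical witness partition $\{(\bar{E}^{t}, Y^{t})\}_{t<s}$ of $(\dag)$ and its bound $\ell'$, apply $(\dag\dag)$ to pick the least $t$ with $Y^{t}$ unbounded and $(\bar{E}^{t}, Y^{t}, \ell') \not\Vdash \langle i, \psi\rangle$ for every $\langle i, \psi\rangle \in \tau_{+}$. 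Set $(\bar{F}_{0}, X_{0}) := (\bar{E}^{t}, Y^{t})$. Item (2) of the definition of condition is then automatic: since the chosen part is unbounded and not in the ``forcing positives'' case, property $(\dag)$ forces it to lie in the ``forces every element of $\tau_{-}$'' case.

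The heart of the verification is item (1): that $(\bar{F}_{0}, X_{0})$ is $\tau$-large. The intended argument is that a counterexample partition $\{(\bar{E}'^{u}, Y'^{u})\}_{u<r}$, when spliced into the $t$-th slot of the original $(\dag)$-witness, yields a refined $(\dag)$-witness of $(\bar{F}^{p}, X^{p})$ in which every good part strongly forces some positive $\langle i_{0}, \exists m\,\pi(e_{0}, m, G)\rangle$ with $(e_{0}, i_{0}) \in \dom(\tau)$ and $\tau(e_{0}, i_{0}) = 0$. By $\BIII$-collection one extracts a single such $(e_{0}, i_{0})$ working for every good part of the spliced witness, and then the modification $\tau'$ of $\tau$ obtained by flipping $\tau(e_{0}, i_{0})$ from $0$ to $1$ is witnessed fair for $(\bar{F}^{p}, X^{p})$ by the spliced partition, contradicting the lex-maximality of $\tau$. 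This $\Pi^{B}_{2}$ bookkeeping is valid in $\WKLo + \BIII$ via Theorem~\ref{thm:Pi01-in-WKL}.3.

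Item (3) is handled color by color. For each $i < k$ such that $(\bar{F}_{0}, X_{0} \cap \mc{A}_{i})$ is $\tau$-large at $i$ up to $\ell_{0}^{q}$, the pair in particular does not force $\langle i, \exists m(\pi(e, m, G) \wedge m \le \ell_{0}^{q})\rangle$, so unpacking $\Vdash$ produces, for each $(e, m)$ with $e, m \le \ell_{0}^{q}$ and $\tau(e, i) = 1$, some $E_{i} \in (F_{0,i}, X_{0} \cap \mc{A}_{i})$ and $n \le \max E_{i}$ witnessing $\neg\pi_{0}(e, m, n, E_{i}[n])$. Taking the left-most such $E_{i}$ and unioning over the finitely many pairs $(e, m)$ yields a single finite extension $F^{q}_{i}$ of $F_{0,i}$ which strongly forces each required $\langle i, \neg\pi(e, m, G)\rangle$. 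For the other colors set $F^{q}_{i} := F_{0, i}$. Finally, apply Theorem~\ref{thm:Pi01-in-WKL}.2 to replace the remaining set by a $\low^{B}$-coded $X^{q}$, set $\ell_{1}^{q}$ to bound $\max \bar{F}^{q}$ and the code of $X^{q}$, and declare $q := (\bar{F}^{q}, X^{q}, \tau, \ell_{0}^{q}, \ell_{1}^{q})$.

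The main obstacle is the extraction of a uniform positive index $(e_{0}, i_{0})$ in the largeness argument of step two: one must invoke $\BIII$-collection on the $\Pi^{B}_{2}$-form of the spliced $(\dag)$-witness to collapse a nonuniform family of forced positives to a single pair that works across all good parts. A secondary concern is that the color-wise refinements in step three could destroy the $\tau$-largeness proved in step two; this is absorbed by noting that each refinement is a left-most consequence of data already supplied by $\tau$-largeness at $i$, so it can be grafted into any would-be counterexample partition without altering the outcome, using Remark~\ref{rem:largeness}(2) and Theorem~\ref{thm:Pi01-in-WKL}.3 to keep the localized checks within the $\Delta^{0}_{2}$-definable fragment. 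The canonical left-most choices throughout, of $\tau$, of the $(\dag)$-witness, of the index $t$, of the refining $E_{i}$, and of the low code for $X^{q}$, are exactly what makes the whole construction $B''$-primitive recursive, as required by the blocking argument of the following section.
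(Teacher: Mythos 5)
There is a genuine gap, and it sits exactly at the heart of the verification of clause~1. You select the part $(\bar E^{t},Y^{t})$ that $(\dag\dag)$ hands you directly (least $t$ with $Y^{t}$ unbounded and not forcing any element of $\tau_{+}$), and then try to prove that \emph{this} part is $\tau$-large by splicing a putative counterexample partition into slot $t$ and contradicting the lexicographic maximality of $\tau$. That argument does not go through. First, the spliced family is indeed a $(\dag)$-witness for $\tau$, but applying $(\dag\dag)$ to it only yields a good part among the \emph{other} slots $t'\neq t$; no contradiction arises, and in fact the good part guaranteed by $(\dag\dag)$ need not be $\tau$-large at all, so you may be verifying clause~1 for the wrong part. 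Second, the lex-maximality contradiction is logically backwards: after flipping some $(e_{0},i_{0})$ with $\tau(e_{0},i_{0})=0$ to $1$, exhibiting the spliced partition only shows that $\tau'$ satisfies $(\dag)$; fairness also requires the universal clause $(\dag\dag)$, which a single partition cannot witness. Worse, the spliced partition has \emph{no} good parts with respect to $\tau'$ (the formerly good parts now force the flipped positive, the rest force old positives or are bounded), so it witnesses the \emph{failure} of $(\dag\dag)$ for $\tau'$, i.e.\ that $\tau'$ is not fair --- which is perfectly consistent with the maximality of $\tau$ (and you would additionally need $(e_{0},i_{0})\notin\dom(\sigma^{p})$ for any such contradiction). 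The paper's proof avoids all of this: it picks the least $t$ such that $(\bar E^{t},Y^{t})$ \emph{is} $\tau$-large, and proves such a $t$ exists by refining \emph{every} slot simultaneously by its counterexample partition (collected with $\BII$); the resulting refinement is a $(\dag)$-witness all of whose parts force a positive or are bounded, contradicting $(\dag\dag)$ of $\tau$-fairness. Largeness of the chosen part then yields clauses~1 and~2 exactly as you say.

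There is also a secondary flaw in your treatment of clause~3: you obtain, for each pair $(e,m)$ with $\tau(e,i)=1$, a separate finite extension $E_{i}\in(F_{0,i},X_{0}\cap\mc{A}_{i})$ with a witness $n\le\max E_{i}$ for $\neg\pi_{0}(e,m,n,E_{i}[n])$, and then take the union over the finitely many $(e,m)$. This does not work: the witnesses are statements about the initial segment $E_{i}[n]$, and unioning incomparable extensions can insert elements below $n$ coming from another $E_{i'}$, destroying the witness. The paper instead proceeds sequentially (its two Claims): handle one $e$ at a time by a finite extension, use that finite extensions preserve $\tau$-largeness at $i$ to keep going, and iterate within $\III$; since each later extension only adds elements above the current maximum, earlier strong-forcing witnesses persist. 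Your construction can be repaired along these lines, but as written both the largeness step and the union step are gaps.
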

\begin{proof}
For a given condition $p=(\bar{F}^{p}, X^{p}, \sigma^{p}, \ell_{0}^{p},\ell_{1}^{p})\in \mathbb{P}$, put $\ell_{0}=\ell_{1}^{p}$.
By Lemma~\ref{lem:fair-extension}, there exists a lexicographically maximal $\tau:\ell_{0}\times k\to 2$ which extends $\sigma^{p}$ such that $(\bar{F}^{p}, X^{p})$ is $\tau$-fair.
Then one can find a family of low Mathias pairs $\{(\bar{E}^{t}, Y^{t})\}_{t<s}$ (of smallest index) and a bound $\ell'\in M$ which witness $(\dag)$.
By $(\dag\dag)$, pick the smallest $t<s$ such that $(\bar{E}^{t}, Y^{t})$ is $\tau$-large.
Such a $t<s$ exists by $\BII$ since for any $\ell''\ge \ell'$ and for any $\{(\bar{D}^{t}, Z^{t})\}_{t<s''}$ which refines $\{(\bar{E}^{t}, Y^{t})\}_{t<s}$, one can apply $(\dag\dag)$ for $\{(\bar{D}^{t}, Z^{t})\}_{t<s''}$ and $\ell''$.
Note that $\tau$-largeness implies that $(\bar{E}^{t}, Y^{t},\ell')\not\Vdash\langle i,\psi \rangle$ for any $\langle i,\psi \rangle\in \tau_{+}$ and $Y^{t}$ is infinite, thus, by $(\dag)$, $(\bar{E}^{t}, Y^{t},\ell')\Vdash\langle i,\psi \rangle$ for any $\langle i,\psi \rangle\in \tau_{-}$.

Now $(\bar{E}^{t}, Y^{t},\tau,\ell_{0},\ell')$ satisfies the first and second clauses to be a condition.
For the third clause, we use the following claims.
We say that $(\bar{D}', Z')$ is a finite extension of $(\bar{D}, Z)$ at $i$ if $(\bar{D}', Z')\le(\bar{D}, Z)$, $Z\setminus Z'$ is finite, and $D'_{i'}=D_{i'}$ for any $i'\neq i$.
One can observe that finite extensions preserve $\tau$-largeness.
\begin{claim*}
Let $(\bar{D}, Z)$ be a finite extension of $(\bar{E}^{t}, Y^{t})$ at $i$.
If $(\bar{D}, Z\cap \mc{A}_{i})$ is $\tau$-large at $i$ up to $\ell_{0}$, then for any $e< \ell_{0}$ such that $\tau(e,i)=1$,
there exists a finite extension $(\bar{D}', Z')\le(\bar{D}, Z)$ at $i$ such that $D'_{i}\in (D_{i},Z\cap\mc{A}_{i})$ and $(\bar{D}', Z',\max\bar{D}'\cup\{\ell'\})\Vdash^{+} \langle i, \A m\le \ell_{0}\,\neg\pi(e,m,G) \rangle$.
\end{claim*}
\begin{claim*}
If $(\bar{E}^{t}, Y^{t}\cap\mc{A}_{i})$ is $\tau$-large at $i$ up to $\ell_{0}$, then there exists a finite extension $(\bar{D}', Z')\le(\bar{E}^{t}, Y^{t})$ at $i$ such that $D'_{i}\in ({E}^{t}_{i}, Y^{t}\cap\mc{A}_{i})$ and $(\bar{D}', Z',\max\bar{D}'\cup\{\ell'\})\Vdash^{+} \langle i, \A m\le \ell_{0}\,\neg\pi(e,m,G) \rangle$ for all $e<\ell_{0}$ such that $\tau(e,i)=1$.
\end{claim*}
One can easily check the first claim by unfolding the definition of $\tau$-largeness at $i$ up to $\ell_{0}$.
Since finite extensions preserve $\tau$-largeness at $i$, the second claim is obtained by applying the first claim repeatedly. (This is possible within $\III$.)

Now we define $(\bar{D}^{*},Z^{*})\le (\bar{E}^{t}, Y^{t})$ as follows.
For each $i<k$, check whether there exists a finite extension $(\bar{D}', Z')\le(\bar{E}^{t}, Y^{t})$ at $i$ such that $D'_{i}\in ({E}^{t}_{i}, Y^{t}\cap\mc{A}_{i})$ and $(\bar{D}', Z',\max\bar{D}'\cup\{\ell'\})\Vdash^{+} \langle i, \A m\le \ell_{0}\,\neg\pi(e,m,G) \rangle$ for all $e<\ell_{0}$ with $\tau(e,i)=1$. (Note that this condition can be expressed by a $\Sigma^{B}_{2}$-formula.)
Put $D^{*}_{i}=D'_{i}$ if such $\bar{D}'$ exists, and put $D^{*}_{i}=E^{t}_{i}$ otherwise.
(More precisely, one can pick minimal such $\bar{D}^{*}$ within $\III$.)
Put $Z^{*}=Y^{t}\setminus[0,\max \bar{D}^{*}]$.
Then, by the second claim, one can observe that for all $i<k$ and $e\le \ell_{0}$, $(\bar{D}^{*}, Z^{*}, \max\bar{D}^{*}\cup\{\ell'\})\Vdash^{+} \langle i, \A m\le \ell_{0}\,\neg\pi(e,m,G) \rangle$ if $(\bar{D}^{*}, Z^{*}\cap \mc{A}_{i})$ is $\tau$-large at $i$ up to $\ell_{0}$ and $\tau(e,i)=1$.
Take the minimal $\ell_{1}$ so that $\ell_{1}$ bounds $\max\bar{D}^{*}\cup\{\ell'\}$ and a code for $Z^{*}$.
Then $q=(\bar{D}^{*}, Z^{*}, \tau, \ell_{0},\ell_{1})$ is the desired extension.
\end{proof}

For a given $p\in \mathbb{P}$, the extension constructed in the proof of Lemma~\ref{lem:left-most-extension} is said to be a \textit{left-most successor of $p$.}
Note that ``$q$ is a left-most successor of $p$'' can be described by a boolean combination of $\Sigma^{0}_{2}$ and $\Pi^{0}_{2}$ formulas.

Let $p_{0}\succ p_{1}\succ \dots$ be the left-most path of $\mathbb{P}$, \textit{i.e.}, $p_{i+1}$ is a left-most successor of $p_{i}$.
More formally, put 
\begin{align*}
\mc{G}&=\{p_{n}: \E \langle p_{j}\mid j\le n \rangle (p_{0}=(\emptyset, \N, \emptyset, 0,1)\wedge \A j<n(p_{j+1}\mbox{ is a left-most successor of }p_{j}))\},\\
 J&=\{n : \E \langle p_{j}\mid j\le n \rangle (p_{0}=(\emptyset, \N, \emptyset, 0,1)\wedge \A j<n(p_{j+1}\mbox{ is a left-most successor of }p_{j}))\}. 
\end{align*}
Both of $J$ and $\mc{G}$ are $\Sigma^{B}_{3}$. 
Note that $J$ may form a proper cut of $M$.
\begin{lem}[$\WKLo+\BIII$]\label{lem:unbdd-G}
 $\mc{G}$ is unbounded, \textit{i.e.}, for any $x\in M$, there exists $p_{i}\in \mc{G}$ such that $\ell_{1}^{p_{i}}>x$.
\end{lem}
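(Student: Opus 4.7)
The plan is to argue by contradiction: suppose some $x\in M$ satisfies $\ell_1^{p_n}\le x$ for every $p_n\in\mc{G}$, and derive a contradiction.

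My first step is to observe strict monotonicity of $n\mapsto \ell_1^{p_n}$ along the left-most path. By the construction of the left-most successor in Lemma~\ref{lem:left-most-extension}, $\ell_0^{p_{n+1}}=\ell_1^{p_n}$, and the pre-condition requirement $\max \bar{F}\cup\{\ell_0\}<\ell_1$ then forces $\ell_1^{p_{n+1}}>\ell_1^{p_n}$. Thus $\ell_1^{p_n}\ge n+1$ on $J$, and the contradiction hypothesis immediately gives $J\subseteq[0,x]$. It therefore suffices to show that $p_{x+1}$ exists, since then $\ell_1^{p_{x+1}}\ge x+2>x$ yields the desired contradiction.

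The heart of the argument is iterating the left-most successor operation $x+1$ times. As noted earlier, this relation is a boolean combination of $\Sigma^0_2$ and $\Pi^0_2$ formulas, so a naive induction on the length of the path would require $\Sigma^B_3$-induction, which is not available in $\BN[3]$. My plan is to exploit the bound $\ell_1^{p_n}\le x$ imposed by the contradiction hypothesis: under it, every piece of data defining $p_n$, namely the finite sets $\bar{F}^{p_n}$, the $\Delta^B_2$-code for the low set $X_0^{p_n}$, the partial function $\sigma^{p_n}$, and the bounds $\ell_0^{p_n},\ell_1^{p_n}$, is bounded by $x$, so that the whole sequence $\langle p_j:j\le n\rangle$ admits a code of size polynomial in $x$. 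Combined with Theorem~\ref{thm:Pi01-in-WKL}, which lets us replace the $\Sigma^B_2$-existence of low sets by bounded $\Pi^0_1$-searches over their codes, this collapses the formula ``a valid left-most path of length $n$ with $\ell_1\le x$ exists'' to $\Pi^B_2$ complexity in $n$. Bounded $\Pi^B_2$-induction, provable in $\BN[3]$, then produces $p_{x+1}$.

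The main obstacle I anticipate is the careful complexity bookkeeping in the last step: verifying that the $\tau$-fairness and $\tau$-largeness predicates appearing in the definition of a left-most successor really do collapse to $\Pi^B_2$ once all low sets are replaced by their codes and all quantifiers restricted to objects bounded by $x$. Once this bounding is carried out, the remaining induction is routine, and the strict-monotonicity observation closes the contradiction.
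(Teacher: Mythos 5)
Your proposal is correct and takes essentially the same route as the paper: the boundedness assumption turns the existential quantifier over the path into a bounded one, which drops the defining formula of $J$ to a level where induction is available in $\BIII$, and Lemma~\ref{lem:left-most-extension} then supplies one more left-most successor, yielding the contradiction (the paper phrases this as $J$ having a maximal element by $\III$ rather than inducting on path length up to $x+1$). One small bookkeeping correction: after bounding, the formula is a boolean combination of $\Sigma^{B}_{2}$ and $\Pi^{B}_{2}$ formulas rather than plain $\Pi^{B}_{2}$ (the left-most-successor relation has both $\Sigma^{0}_{2}$ and $\Pi^{0}_{2}$ ingredients), but this is harmless since induction for such boolean combinations already follows from $\III$, hence from $\BIII$.
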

\begin{proof}
Assume that $\mc{G}$ is bounded by some $\bar{\ell}\in M$.
Then the first existential quantifier in the definition of $J$ is bounded.
Thus it is defined by a boolean combination of $\Sigma^{B}_{2}$ and $\Pi^{B}_{2}$ formulas.
Hence $J$ has a maximal element by $\III$, which contradicts Lemma~\ref{lem:left-most-extension}.
\end{proof}

Thus, $\mc{G}$ is cofinal in $M$.
Our next task is to see that at some $i<k$, the construction of a solution works for any $j\in J$.
If we can find such $i<k$, then $\bigcup_{j\in J} F_{i}^{p_{j}}$ is unbounded in $M$.

For each $j\in J$, put 
\begin{align*}
\eta^{j}:=\{i<k: \A m\le \ell_{0}^{p_{j}} (\bar{F}^{p_{j}}, X^{p_{j}},\ell_{1}^{p_{j}})\Vdash^{+} \langle i, \neg\pi(e,m,G) \rangle\mbox{ for any }e\le \ell_{0}^{p_{j}}\mbox{ with }\sigma^{p_{j}}(e,i)=1\}. 
\end{align*}
Here, $i\in\eta^{j}$ means that the construction for color $i$ is sill working at stage $j\in J$.
Trivially, $\eta^{j}\supseteq\eta^{j'}$ if $j\le j'$.
\begin{lem}[$\WKLo+\BIII$]\label{lem:color-survive1}
$\eta^{j}\neq \emptyset$ for any $j\in J$.
\end{lem}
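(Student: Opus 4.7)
The plan is to argue by contradiction. Suppose $\eta^{j}=\emptyset$ for some $j\in J$. Clause 3 of the definition of a condition states that, for each $i<k$, if $(\bar{F}^{p_{j}}, X^{p_{j}}\cap \mc{A}_{i})$ is $\sigma^{p_{j}}$-large at $i$ up to $\ell_{0}^{p_{j}}$, then $i\in\eta^{j}$. So from $\eta^{j}=\emptyset$ I will deduce that for every $i<k$, the pair $(\bar{F}^{p_{j}}, X^{p_{j}}\cap \mc{A}_{i})$ is \emph{not} $\sigma^{p_{j}}$-large at $i$ up to $\ell_{0}^{p_{j}}$.

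Next, for each $i<k$ I will unfold this failure to obtain a $\Delta^{0}_{2}$-definable finite family of Mathias pairs $\{(\bar{E}^{t,i}, Y^{t,i})\}_{t<s_{i}}$ and a bound $\ell'_{i}$ such that $E^{t,i}_{i}\subseteq \mc{A}_{i}$, $(\bar{E}^{t,i}, Y^{t,i})\le (\bar{F}^{p_{j}}, X^{p_{j}}\cap\mc{A}_{i})$, the $Y^{t,i}$'s partition $X^{p_{j}}\cap\mc{A}_{i}$ modulo $\ell'_{i}$, and each member is either bounded by $\ell'_{i}$ or already forces some $\langle i,\psi\rangle\in \sigma^{p_{j}}_{+,i,\le\ell_{0}^{p_{j}}}$. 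The statement ``not $\sigma^{p_{j}}$-large at $i$ up to $\ell_{0}^{p_{j}}$'' is (by Theorem~\ref{thm:Pi01-in-WKL}.3 and Remark~\ref{rem:largeness}.2) essentially $\Sigma^{B}_{3}$, so since we are working in $\WKLo+\BIII$, the bounding principle $\BIII$ lets me collect such witnesses uniformly in $i<k$, yielding codes for all $k$ families together with a single bound $\ell':=\max_{i<k}\ell'_{i}$.

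Now I assemble the combined family $\{(\bar{E}^{t,i}, Y^{t,i})\}_{i<k,\,t<s_{i}}$ after one cosmetic modification: for each $(t,i)$ and each $i'\neq i$, I replace the component $E^{t,i}_{i'}$ by $F^{p_{j}}_{i'}$. Because both the forcing relation $\Vdash$ and the strong forcing $\Vdash^{+}$ at color $i$ depend only on the $i$-th component of the $k$-tuple of finite sets, this modification preserves all forcing facts established above. After this change, $E^{t,i}_{i'}\subseteq \mc{A}_{i'}$ for every $i'<k$; moreover $(\bar{E}^{t,i}, Y^{t,i})\le (\bar{F}^{p_{j}}, X^{p_{j}})$, $\max\bar{E}^{t,i}\le \ell'$, and since $\mc{A}_{0},\dots,\mc{A}_{k-1}$ partition $M$, the sets $Y^{t,i}$ together satisfy $X^{p_{j}}\supseteq \bigsqcup_{t,i}Y^{t,i}\supseteq X^{p_{j}}\setminus \ell'$. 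Yet by construction each $(\bar{E}^{t,i}, Y^{t,i}, \ell')$ is either $Y^{t,i}$-bounded by $\ell'$ or forces some $\langle i,\psi\rangle\in \sigma^{p_{j}}_{+}$ (since $\sigma^{p_{j}}_{+,i,\le\ell_{0}^{p_{j}}}\subseteq\sigma^{p_{j}}_{+}$ and enlarging the bound from $\ell'_{i}$ to $\ell'$ only strengthens the forcing relation). This directly contradicts the $\sigma^{p_{j}}$-largeness of $(\bar{F}^{p_{j}}, X^{p_{j}})$ guaranteed by clause 1 of $p_{j}$ being a condition.

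The main obstacle will be bookkeeping the logical complexity: I must verify that ``not $\sigma$-large at $i$ up to $\ell$'' is genuinely $\Sigma^{B}_{3}$ so that $\BIII$ is applicable, and I must confirm (using Remark~\ref{rem:largeness}.1) that a $\Delta^{0}_{2}$-definable combined family is admissible in testing the unrestricted $\sigma^{p_{j}}$-largeness of $(\bar{F}^{p_{j}}, X^{p_{j}})$. Once these are checked, the contradiction is immediate.
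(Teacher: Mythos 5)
Your proposal is correct and follows essentially the same route as the paper: reduce via clause 3 of the condition definition to showing largeness at some color, assume failure at every $i<k$, merge the witnessing families across colors, and contradict the $\sigma^{p_j}$-largeness from clause 1 using Remark~\ref{rem:largeness}.1. The extra bookkeeping you supply (replacing the off-color components by $F^{p_j}_{i'}$, passing to the common bound $\ell'$, and invoking $\BIII$ to collect the $k$ witness families) is exactly the detail the paper's terse proof leaves implicit, and it checks out.
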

\begin{proof}
By the definition of the condition, it is enough to show that $(\bar{F}^{p_{j}}, X^{p_{j}}\cap \mc{A}_{i})$ is $\sigma^{p_{j}}$-large at $i$ up to $\ell_{0}^{p_{j}}$ for some $i<k$.
Assume not, then for each $i<k$ there exists a witness $\{(\bar{E}^{t,i}, Y^{t,i})\}_{t<s_{i}}$ so that $(\bar{F}^{p_{j}}, X^{p_{j}}\cap \mc{A}_{i})$ is not $\sigma^{p_{j}}$-large at $i$ up to $\ell_{0}^{p_{j}}$.
Then the union $\{(\bar{E}^{t,i}, Y^{t,i})\}_{t<s_{i},i<k}$ indicates that $(\bar{F}^{p_{j}}, X^{p_{j}})$ is not $\sigma^{p_{j}}$-large by Remark~\ref{rem:largeness}.1, which is a contradiction.
\end{proof}
\begin{lem}[$\WKLo+\BIII$]\label{lem:color-survive2}
There exists $i<k$ such that $i\in \eta^{j}$ for any $j\in J$.
\end{lem}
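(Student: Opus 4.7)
The plan is to argue by contradiction. Suppose that for every $i<k$ there is some $j\in J$ with $i\notin\eta^{j}$; the goal is to produce a single $j^{\star}\in J$ with $\eta^{j^{\star}}=\emptyset$, contradicting Lemma~\ref{lem:color-survive1}. Observe first that the formula $\varphi(i,j):\equiv (j\in J\wedge i\notin\eta^{j})$ is $\Sigma^{B}_{3}$: its first conjunct is the defining $\Sigma^{B}_{3}$-formula of $J$, a single existential over a sequence witness whose matrix $\chi_{J}$ is a boolean combination of $\Sigma^{B}_{2}$ and $\Pi^{B}_{2}$ formulas; its second conjunct, once such a witness is fixed, is computable from the recovered condition $p_{j}$ and hence adds no further complexity.

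I would then apply $\BN[3]$ to the pair-existential $\exists\langle j,s\rangle\,(\chi_{J}(j,s)\wedge i\notin\eta^{j})$ to obtain a uniform bound $\bar j\in M$ so that for every $i<k$ there exist $j,s<\bar j$ satisfying both clauses simultaneously. The key move is then to set
\[
j^{\star}:=\max\bigl\{j<\bar j:\exists s<\bar j\ \chi_{J}(j,s)\bigr\}.
\]
Since $\chi_{J}$ is a boolean combination of $\Sigma^{B}_{2}$ and $\Pi^{B}_{2}$ formulas and the quantifier $\exists s<\bar j$ is bounded, the defining formula of this set stays within that boolean class, exactly as in the complexity-reduction step of the proof of Lemma~\ref{lem:unbdd-G}, so $\III$ supplies the maximum. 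Clearly $j^{\star}\in J$ (a witness for $j^{\star}$ in the set is a fortiori a witness for $j^{\star}\in J$), and every collection witness $j_{i}$ lies in the set, so $j_{i}\le j^{\star}$. By monotonicity of $\eta^{j}$ in $j$ along $J$, it then follows that $i\notin\eta^{j^{\star}}$ for every $i<k$, and hence $\eta^{j^{\star}}=\emptyset$, the desired contradiction.

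The main obstacle I anticipate is the complexity bookkeeping: checking that the single application of $\BN[3]$ to the pair-existential really bounds both $j$ and its internal sequence witness $s$ (this only requires that the pairing be well-behaved with respect to the order), and confirming that bounded existential quantification over the boolean combination of $\Sigma^{B}_{2}$ and $\Pi^{B}_{2}$ formulas keeps us inside that class so that $\III$ actually applies to extract the maximum. Both points mirror the reduction already used in Lemma~\ref{lem:unbdd-G}; once they are verified, the monotonicity argument above finishes the proof.
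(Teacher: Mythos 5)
Your proof is correct, but it takes a genuinely different route from the paper's. The paper also argues by contradiction and also finishes via the monotonicity of $\eta^{j}$ and Lemma~\ref{lem:color-survive1}, but it applies $\BIII$ to bound not the stages $j$ themselves but the values $\ell_{1}^{p_{j}}$: it gets a single $\bar\ell$ such that every $i<k$ drops out of $\eta^{j}$ at some stage with $\ell_{1}^{p_{j}}<\bar\ell$, then invokes Lemma~\ref{lem:unbdd-G} (unboundedness of $\mc{G}$) to find a stage $j'$ with $\ell_{1}^{p_{j'}}>\bar\ell$; since $\ell_{1}$ increases along the chain, all dropout stages precede $j'$, so $\eta^{j'}=\emptyset$. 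You instead bound the stages together with their sequence witnesses by one application of $\BN[3]$ and extract a maximal bounded element $j^{\star}$ of $J$ by induction, thereby re-running inside this proof the same complexity-reduction step (bounding the witness quantifier so that $\III$, or more carefully $\mathrm{I}\Delta^{0}_{3}$, which follows from $\BIII$, applies) that the paper performs once and for all in the proof of Lemma~\ref{lem:unbdd-G}. What the paper's route buys is that the definability/induction bookkeeping is quarantined in Lemma~\ref{lem:unbdd-G} and used here only as a black box; what your route buys is independence from that lemma (you never need $\mc{G}$ to be cofinal, only that the relevant stages have bounded witnesses), at the cost of repeating its complexity argument. Your two flagged verification points are exactly the right ones, and both go through: given the witness $s$, recovering $p_{j}$ makes ``$i\notin\eta^{j}$'' a bounded condition, so the collection instance is $\Sigma^{B}_{3}$; and the maximum extraction is the same bounded-quantifier reduction the paper itself uses. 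Both proofs lean equally on the stated monotonicity $\eta^{j}\supseteq\eta^{j'}$ for $j\le j'$, so no new obligation arises there.
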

\begin{proof}
Assume that such $i<k$ does not exist.
Then we have $\A i<k\,\E \bar{\ell}\,\E j\in J(i\notin \eta^{j}\wedge \ell_{1}^{p_{j}}<\bar{\ell})$.
Thus, by $\BIII$, there exists $\ell\in\N$ such that $\A i<k\, \E j\in J(i\notin \eta^{j}\wedge \ell_{1}^{p_{j}}<\bar{\ell})$.
By Lemma~\ref{lem:unbdd-G}, there exists $p_{j'}\in\mc{G}$ such that $\ell_{1}^{p_{j'}}>\bar{\ell}$.
Then $\eta^{j'}=\emptyset$ by the monotonicity of $\eta^{j}$, which contradicts Lemma~\ref{lem:color-survive1}.
\end{proof}

\begin{proof}[\it Proof of Theorem~\ref{thm:main-construction}.]
By Lemma~\ref{lem:color-survive2}, pick a color $i<k$ such that $i\in \eta^{j}$ for every $j\in J$ and put $G:=\bigcup_{j\in J} F_{i}^{p_{j}}$.
Then $G\subseteq \mc{A}_{i}$.
Take $e_{\inf}\in \N$ so that $\A m\E n>m(n\in G)\leftrightarrow \A m\neg\pi(e_{\inf},m,G)$.
Then, for large enough $j\in J$, $\sigma^{p_{j}}(e_{\inf},i)=1$ since ``$G$ is finite'' is never forced by an infinite Mathias pair.
Thus, $G$ is infinite by the third clause of the definition of conditions.
$G$ is $\Delta^{B}_{3}$ since $x\in G\leftrightarrow \E j\in J (\ell_{0}^{p_{j}}>x\wedge x\in F_{i}^{p_{j}})$ and $x\notin G\leftrightarrow \E j\in J (\ell_{0}^{p_{j}}>x\wedge x\notin F_{i}^{p_{j}})$.
For any $e\in \N$, $\A m \neg\pi(e,m,G)$ holds if and only if $\E j\in J(\ell_{0}^{p_{j}}>e\wedge \sigma^{p_{j}}(e,i)=1)$.
Thus, any $\Pi^{B\oplus G}_{2}$-formula is equivalent to a $\Sigma^{B}_{3}$-formula, and hence any $\Sigma^{B\oplus G}_{3}$-formula is equivalent to a $\Sigma^{B}_{3}$-formula.
\end{proof}

\bibliographystyle{plain}
\bibliography{bib-rt2inf}

\end{document}